
\documentclass[11pt]{article}%
\usepackage{amsmath}
\usepackage{amsfonts}
\usepackage{amssymb}
\usepackage{graphicx}%
\setcounter{MaxMatrixCols}{30}
\newtheorem{theorem}{Theorem}

\newtheorem{conjecture}[theorem]{Conjecture}
\newtheorem{corollary}[theorem]{Corollary}

\newtheorem{lemma}[theorem]{Lemma}

\newtheorem{problem}[theorem]{Problem}
\newtheorem{proposition}[theorem]{Proposition}
\newtheorem{remark}[theorem]{Remark}

\newenvironment{proof}[1][Proof]{\noindent\textbf{#1.} }{\ \rule{0.5em}{0.5em}}
\setlength{\oddsidemargin}{0 in}
\setlength{\evensidemargin}{0 in}
\setlength{\topmargin}{0 in}
\setlength{\textwidth}{6.5 in}
\graphicspath{    {converted_graphics/}    {/}}
\begin{document}

\begin{center}
{\LARGE Extinction and the Allee Effect in an Age Structured}

{\LARGE Ricker Population Model with Inter-stage Interaction}
\end{center}

\medskip

\begin{center}
N. LAZARYAN and H. SEDAGHAT
\end{center}

\medskip

\begin{abstract}
We study the evolution in discrete time of certain age-structured populations,
such as adults and juveniles, with a Ricker fitness function. We determine
conditions for the convergence of orbits to the origin (extinction) in the
presence of the Allee effect and time-dependent vital rates. We show that when
stages interact, they may survive in the absence of interior fixed points, a
surprising situation that is impossible without inter-stage interactions. We
also examine the shift in the interior Allee equilibrium caused by the
occurrence of interactions between stages and find that the extinction or
Allee threshold does not extend to the new boundaries set by the shift in
equilibrium, i.e. no interior equilibria are on the extinction threshold.

\end{abstract}

\medskip

\section{Introduction}

The evolution of certain types of biological populations from a period, or
time interval, $n$ to the next may be modeled by the discrete system
\begin{align}
x_{n+1}  &  =s_{n}x_{n}+s_{n}^{\prime}y_{n}\label{1}\\
y_{n+1}  &  =x_{n}^{\lambda}e^{r_{n}-bx_{n+1}-cx_{n}} \label{2}%
\end{align}
where $\lambda,c>0$, $b\geq0$ with $s_{n}\in\lbrack0,1)$, $s_{n}^{\prime}%
\in(0,1]$ and $r_{n}\in(-\infty,\infty)$ for all $n$.

A common example is the population of a single species whose members are
differentiated by their age group, where, e.g. $x_{n}$ and $y_{n}$ represent,
respectively the population densities of adults and juveniles in time period
$n$. In this setting, $s_{n}$ and $s_{n}^{\prime}$ denote the survival rates
of adults and juveniles, respectively. For examples of stage-structured
models, see \cite{cush98} and \cite{LS} and references thereof.

The time dependent parameters $r_{n},s_{n},s_{n}^{\prime}$ may be periodic in
the presence of periodic factors such as seasonal variations in the
environment, migration, harvesting, predation, etc. The effects of inter-stage
(adult-juvenile) interactions may be included with $b>0$. In this case,
Equation (\ref{2}) indicates that the juvenile density in each period is
adversely affected by adults present in the same period. Causes include
competition with adults for scarce resources like food or in some cases,
cannibalization of juveniles by adults.

The system (\ref{1})-(\ref{2}) is not in the standard form that can be
represented by a planar map. Its standard form is obtained by substituting
$s_{n}x_{n}+s_{n}^{\prime}y_{n}$ from (\ref{1}) for $x_{n+1}$ in (\ref{2}) and
rearranging terms to obtain the standard planar system%
\begin{align}
x_{n+1}  &  =s_{n}x_{n}+s_{n}^{\prime}y_{n}\label{1b}\\
y_{n+1}  &  =x_{n}^{\lambda}e^{r_{n}-(c+bs_{n})x_{n}-bs_{n}^{\prime}y_{n}}
\label{2b}%
\end{align}

In this form, the system is a special case of the age-structured model
\begin{subequations}
\label{c}%
\begin{align*}
A(t+1) &  =s_{1}(t)\sigma_{1}(c_{11}(t)J(t),c_{12}(t)A(t))J(t)+s_{2}%
(t)\sigma_{2}(c_{21}(t)J(t),c_{22}(t)A(t))A(t)\\
J(t+1) &  =b(t)\phi(c_{1}(t)J(t),c_{2}(t)A(t))A(t)
\end{align*}
introduced in \cite{Cush}. In this general case, $A(t)$ and $J(t)$ are
population densities of adults and juveniles respectively, remaining after $t$
periods. The function $\phi$ is exponential in (\ref{2b}) but other choices
may be considered for modeling different types of population dynamics
(\cite{AJ}, \cite{Ber}, \cite{LS}, \cite{LP}, \cite{Z}).

The Allee effect describes the positive correlation between population density
and its per capita birth rate. The greater the size of the population, the
better it fares. The increase in the overall fitness of the population at
greater densities is attributed to cooperation (\cite{Cour}). The Allee
principle was first introduced by W. Allee (\cite{A1}, \cite{A2}) at the time
when the prevailing focus was on the effects of overcrowding and competition
on the survival of the species. The Allee principle focuses on how low
population density, or under-crowding, affects the survival or extinction of
the species.

A distinction is made between the weak and strong Allee effects. The effect is
weak if per capita population growth is low but positive at lower densities
compared to that at higher densities. In the presence of the strong Allee
effect, population rate below a critical threshold is negative (\cite{Cour}).

Mathematically, the map that defines the dynamical system that exhibits a
strong Allee effect is characterized by three fixed points - the extinction or
zero fixed point; a small positive fixed point referred to as the Allee
threshold; and a bigger positive fixed point called the carrying capacity
(\cite{luis}). When the population size is at or above the Allee threshold,
growth in population density is observed, whereas beneath the threshold,
population density declines. When the population size is at or above the Allee
threshold, per capita growth in the population is positive, whereas beneath
the threshold, population density declines. For more details on the Allee
effect and its various contributing factors see \cite{Ber}, \cite{Cour},
\cite{cushing2}, \cite{jang}, \cite{livad}, \cite{luis}, \cite{schreiber} and
references thereof; in particular, see \cite{cushing}, \cite{cushing1},
\cite{elaydi}, \cite{lidicker}, \cite{yakubu}.

We study the system in (\ref{1})-(\ref{2}) by first folding it into the
second-order scalar equation (\ref{3}) below. The strong Allee effect is
exhibited when $\lambda>1$, as might be expected. However, when $b>0$ the
details of the Allee effect such as the nature of the extinction region and
its boundary are not fully understood in the case of (\ref{3}). In particular,
in the autonomous case the extinction region is smaller than expected if
$b>0$. In this paper we establish this fact for a special case of (\ref{3})
and also obtain general conditions for the convergence of solutions of
(\ref{3}) to zero when $\lambda>1$ and $b\geq0.$

The main results of this paper are as follows: Theorem \ref{bc0} and its
immediate corollary state that when $\lambda>1$ extinction occurs for all
values of the system parameters $b$, $c$, $r_{n}$, $s_{n}$, $s_{n}^{\prime}$
if the initial values $x_{0},y_{0}$ are suitably restricted. Alternatively,
extinction occurs for \textit{all} non-negative initial values if the system
parameters are sufficiently restricted. While these results are in line with
what is known in the literature about Ricker-type systems, Lemma \ref{eoz} and
Theorem \ref{nfpa} and their immediate corollaries contain results that are
not as predictable. They establish for the case $s_{n}=0$ for all $n$ (e.g. a
semelparous species) that the population may become extinct or alternatively,
its size may oscillate depending on whether orbits enter, or avoid certain
regions of the positive quadrant. In particular, we find that if the
interaction parameter $b$ is positive then survival may occur for open regions
of the parameter space even if the system contains no positive fixed points;
this is rather unexpected (and false if $b=0$).

\section{Convergence to zero: general conditions}

The standard planar form (\ref{1b})-(\ref{2b}) has additional time-dependent
parameters in the exponential function (\ref{2b}) that did not exist in the
original system. For this reason and others seen below, we find it more
convenient to study (\ref{1})-(\ref{2}) using the alternative folding method
discussed in \cite{sed}. The system (\ref{1})-(\ref{2}) may be folded into a
scalar second-order difference equation by first solving (\ref{1}) for $y_{n}$
to obtain:%

\end{subequations}
\begin{equation}
y_{n}=\frac{1}{s_{n}^{\prime}}(x_{n+1}-s_{n}x_{n}) \label{yn}%
\end{equation}

Next, back-shifting the indices in (\ref{2}) and substituting the result in
(\ref{1}) yields%
\[
x_{n+1}=s_{n}x_{n}+s_{n}^{\prime}x_{n-1}^{\lambda}e^{r_{n-1}-bx_{n}-cx_{n-1}%
}=s_{n}x_{n}+x_{n-1}^{\lambda}e^{r_{n-1}+\ln s_{n}^{\prime}-bx_{n}-cx_{n-1}}%
\]
or equivalently,%
\begin{equation}
x_{n+1}=s_{n}x_{n}+x_{n-1}^{\lambda}e^{a_{n}-bx_{n}-cx_{n-1}}\label{2a}%
\end{equation}
where $a_{n}=r_{n-1}+\ln s_{n}^{\prime}$. Note that this equation does not
introduce additional time-dependent parameters in the exponential function.

In terms of populations of adults and juveniles, starting from initial adult
and juvenile population densities $x_{0}$ and $y_{0}$ respectively, a solution
$\{x_{n}\}$ of the scalar equation (\ref{2a}) yields the adult population
density. The juvenile population density $y_{n}$ is found via (\ref{yn}). The
initial values for (\ref{2a}) are $x_{0}$ and $x_{1}=s_{0}x_{0}+s_{0}^{\prime
}y_{0}.$

Without loss of generality, we may assume that $c=1$ and normalize the
equation by a simple change of variables and parameters: $x_{n}\rightarrow
cx_{n}$, $a_{n}\rightarrow a_{n}+(\lambda-1)\ln c$ and $b\rightarrow b/c$.
Thus, we obtain the following more convenient form of (\ref{2a})
\begin{equation}
x_{n+1}=s_{n}x_{n}+x_{n-1}^{\lambda}e^{a_{n}-bx_{n}-x_{n-1}} \label{3}%
\end{equation}

The next result gives general sufficient conditions for the boundedness of
solutions and for their convergence to 0; also see Lemma \ref{eoz} below for
another general result on convergence to 0.

\begin{theorem}
\label{bc0}Assume that $\lambda>0$, $s\doteq\sup_{n\geq0}\{s_{n}\}<1$ and
$A\doteq\sup_{n\geq0}\{a_{n}\}<\infty$.

(a) Every non-negative solution of (\ref{3}) is eventually uniformly bounded.

(b) Let $\lambda>1$ and define%
\[
\rho=\exp\left(  -\frac{A-\ln(1-s)}{\lambda-1}\right)  .
\]

If $\{x_{n}\}$ is a solution of (\ref{3}) with $x_{1},x_{0}<\rho$ then
$\lim_{n\rightarrow\infty}x_{n}=0$.

(c) Let $\lambda>1$. If
\begin{equation}
\;A<\ln(1-s)+(\lambda-1)[1-\ln(\lambda-1)] \label{b}%
\end{equation}
then every non-negative solution of (\ref{3}) converges to $0$.
\end{theorem}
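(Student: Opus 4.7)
My approach handles the three parts in sequence, each leveraging a bound on the nonlinear term $x_{n-1}^\lambda e^{a_n - b x_n - x_{n-1}}$, with differing amounts of the exponential decay retained. Throughout, I will use $a_n \leq A$, $b x_n \geq 0$, and $s_n \leq s$ to pass to a time-independent comparison recursion.

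For (a), I plan to use the elementary calculus fact that $t \mapsto t^\lambda e^{-t}$ attains its maximum $\lambda^\lambda e^{-\lambda}$ at $t = \lambda$. This yields $x_{n+1} \leq s\, x_n + e^{A} \lambda^\lambda e^{-\lambda}$, a linear first-order inequality whose iteration with $s < 1$ gives $\limsup_n x_n \leq e^A \lambda^\lambda e^{-\lambda}/(1-s)$, i.e. eventual uniform boundedness.

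For (b), the key observation is that the definition of $\rho$ rearranges exactly to $s\rho + \rho^\lambda e^A = \rho$. I will first show by induction that $x_n < \rho$ for all $n$: from $x_{n-1},x_n<\rho$ the estimate $x_{n+1} \leq s x_n + x_{n-1}^\lambda e^A < s\rho + \rho^\lambda e^A = \rho$ propagates the bound. I then track $M_n = \max(x_{n-1},x_n)$, noting that $x_{n+1} \leq (s + M_n^{\lambda-1} e^A) M_n$ with $s + M_n^{\lambda-1}e^A < 1$, so $M_n$ is nonincreasing. Writing $L = \lim M_n \in [0,\rho)$, taking limit superiors in the recursion gives $(1-s)L \leq L^\lambda e^A$, which rearranges (for $L>0$) to $L \geq \rho$; the resulting contradiction forces $L=0$, hence $x_n \to 0$.

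For (c), I retain the factor $e^{A-x_{n-1}}$ and study $\psi(t) = t^{\lambda-1} e^{A - t}$ on $(0,\infty)$. A direct computation shows $\psi$ attains its supremum at $t = \lambda - 1$ with value $(\lambda-1)^{\lambda-1} e^{A-\lambda+1}$, and a logarithmic rewriting exhibits the hypothesis (\ref{b}) as precisely the statement that this supremum is strictly less than $1-s$. Consequently there exists $\epsilon \in (0,1)$ with $x_{n-1}^\lambda e^{A-x_{n-1}} \leq (1-s)(1-\epsilon) x_{n-1}$ for all $n$, reducing the dynamics to $x_{n+1} \leq \alpha x_n + \beta x_{n-1}$ with $\alpha+\beta = 1 - (1-s)\epsilon < 1$. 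Setting $M_n = \max(x_{n-1},x_n)$ again, this yields $M_{n+2} \leq (\alpha+\beta)M_n$, hence geometric decay to $0$ of $M_n$, and therefore of $x_n$, for every non-negative orbit.

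The main obstacle I anticipate is the bookkeeping in part (c) that matches the peak value of $\psi$ to the explicit algebraic form of condition (\ref{b}); once the extremal analysis of $\psi$ is carried out, the remaining work reduces to standard manipulation of the two-step maximum $M_n$, which bridges the second-order nature of (\ref{3}) to a contractive first-order comparison.
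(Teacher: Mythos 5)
Your proposal is correct. Parts (a) and (c) coincide with the paper's argument: the same maximization of $t^{\lambda}e^{-t}$ at $t=\lambda$ (for boundedness) and of $t^{\lambda-1}e^{A-t}$ at $t=\lambda-1$ (for global convergence), the same identification of condition (\ref{b}) with the contraction bound $s+(\lambda-1)^{\lambda-1}e^{A-\lambda+1}<1$, and the same reduction via the two-step maximum. In part (b) you finish differently: the paper backs off from $\rho$ to a strictly smaller $\rho_{\varepsilon}$ in order to extract a uniform factor $\delta=s+e^{-\varepsilon}(1-s)<1$ and concludes with the explicit geometric estimate $x_{2n},x_{2n+1}\leq\delta^{n}\max\{x_{0},x_{1}\}$, whereas you prove invariance of $[0,\rho)$ using the identity $s\rho+\rho^{\lambda}e^{A}=\rho$, show that $M_{n}=\max\{x_{n-1},x_{n}\}$ is nonincreasing, and rule out a positive limit $L$ via the inequality $(1-s)L\leq e^{A}L^{\lambda}$, which would force $L\geq\rho$ against $L\leq\max\{x_{0},x_{1}\}<\rho$. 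Both are valid; the paper's version yields a convergence rate, yours trades the rate for less $\varepsilon$-bookkeeping. One step worth writing out explicitly in your version of (b) is the passage from $\lim_{n}M_{n}=L$ to $(1-s)L\leq e^{A}L^{\lambda}$: it uses $\limsup_{n}x_{n}=\lim_{n}M_{n}$ together with subadditivity of $\limsup$ and the continuity and monotonicity of $t\mapsto t^{\lambda}$, all of which hold here but should be stated.
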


\begin{proof}
(a) If $\{x_{n}\}$ is a solution of (\ref{3}) with non-negative initial values
then $x_{n}\geq0$ so that $\{x_{n}\}$ is bounded below by 0. Further,
\[
x_{n+1}\leq sx_{n}+x_{n-1}^{\lambda}e^{A-x_{n-1}}\leq sx_{n}+e^{A}\left[
x_{n-1}e^{-(1/\lambda)x_{n-1}}\right]  ^{\lambda}%
\]

For all $r>0$ the maximum value of the function $xe^{-rx}$ is $1/re$ which
occurs at the unique critical value $1/r.$ Thus, for all $n\geq0$
\[
x_{n+1}\leq sx_{n}+e^{A}\left(  \frac{\lambda}{e}\right)  ^{\lambda}%
=sx_{n}+\lambda^{\lambda}e^{A-\lambda}%
\]

This inequality yields%
\begin{align*}
x_{1}  &  \leq sx_{0}+\lambda^{\lambda}e^{A-\lambda}\\
x_{2}  &  \leq sx_{1}+\lambda^{\lambda}e^{A-\lambda}\leq s^{2}x_{0}%
+\lambda^{\lambda}e^{A-\lambda}(1+s)\\
&  \vdots\\
x_{n}  &  \leq s^{n}x_{0}+\lambda^{\lambda}e^{A-\lambda}(1+s+\cdots
+s^{n-1})=\frac{\lambda^{\lambda}e^{A-\lambda}}{1-s}+\left(  x_{0}%
-\frac{\lambda^{\lambda}e^{A-\lambda}}{1-s}\right)  s^{n}%
\end{align*}

Since the second term above vanishes as $n$ goes to infinity it follows that
the solution $\{x_{n}\}$ is eventually uniformly bounded, e.g. by the number
$1+\lambda^{\lambda}e^{A-\lambda}/(1-s)$ for all sufficiently large $n$.

(b) Assume that $\lambda>1$ and choose initial values $x_{0},x_{1}<\rho.$ Then
there is $\varepsilon>0$ such that
\[
\mu\doteq\max\{x_{0},x_{1}\}\leq\exp\left(  -\frac{\varepsilon+A-\ln
(1-s)}{\lambda-1}\right)  \doteq\rho_{\varepsilon}<\rho
\]

Thus, $e^{A}x_{0}^{\lambda-1}\leq e^{-\varepsilon+\ln(1-s)}=e^{-\varepsilon
}(1-s)$ and it follows that%
\[
x_{2}\leq sx_{1}+x_{0}^{\lambda}e^{A}\leq sx_{1}+\left(  e^{A}x_{0}%
^{\lambda-1}\right)  x_{0}\leq sx_{1}+e^{-\varepsilon}(1-s)x_{0}\leq\lbrack
s+e^{-\varepsilon}(1-s)]\mu
\]

Notice that if $\delta\doteq s+e^{-\varepsilon}(1-s)$ then $\delta<1$ and
thus, $x_{2}\leq\delta\mu<\rho_{\varepsilon}.$ Next, since $x_{1}\leq
\rho_{\varepsilon}$%
\[
x_{3}\leq sx_{2}+x_{1}^{\lambda}e^{A}\leq sx_{2}+\left(  e^{A}x_{1}%
^{\lambda-1}\right)  x_{1}\leq sx_{2}+e^{-\varepsilon}(1-s)x_{1}\leq\delta
\max\{x_{1},x_{2}\}\leq\delta\mu
\]
where the last inequality is true because $x_{1}\leq\mu$ and $x_{2}\leq
\delta\mu.$ Next, since $x_{2},x_{3}<\rho_{\varepsilon}$ it follows that
$e^{A}x_{2}^{\lambda-1},e^{A}x_{3}^{\lambda-1}<e^{-\varepsilon}(1-s)$ and
thus,
\begin{align*}
x_{4}  &  \leq sx_{3}+x_{2}^{\lambda}e^{A}\leq sx_{3}+\left(  e^{A}%
x_{2}^{\lambda-1}\right)  x_{2}<sx_{3}+e^{-\varepsilon}(1-s)x_{2}\leq
\delta\max\{x_{2},x_{3}\}\leq\delta^{2}\mu\\
x_{5}  &  \leq sx_{4}+x_{3}^{\lambda}e^{A}\leq sx_{4}+\left(  e^{A}%
x_{3}^{\lambda-1}\right)  x_{3}<sx_{4}+e^{-\varepsilon}(1-s)x_{3}\leq
\delta\max\{x_{3},x_{4}\}\leq\delta^{2}\mu
\end{align*}

We have thus shown that
\[
x_{0},x_{1}\leq\mu<\rho_{\varepsilon},\quad x_{2},x_{3}\leq\delta\mu
<\rho_{\varepsilon},\quad x_{4},x_{5}\leq\delta^{2}\mu<\rho_{\varepsilon}.
\]

Proceeding the same way, it follows inductively that%
\[
x_{2n},x_{2n+1}\leq\delta^{n}\mu
\]
for all $n\geq0.$ Therefore, $\lim_{n\rightarrow\infty}x_{n}=0.$

(c) Assume that $\lambda>1$. Since $ue^{-ru}\leq1/er$ it follows that
\[
x_{n+1}\leq sx_{n}+x_{n-1}^{\lambda}e^{A-x_{n-1}}\leq sx_{n}+e^{A}%
x_{n-1}\left(  x_{n-1}e^{-x_{n-1}/(\lambda-1)}\right)  ^{\lambda-1}\leq
sx_{n}+e^{A}\left(  \frac{\lambda-1}{e}\right)  ^{\lambda-1}x_{n-1}%
\]

If $\sigma\doteq s+e^{A-\lambda+1}(\lambda-1)^{\lambda-1}$ then $\sigma<1$ by
(\ref{b}) and we have shown above that%
\begin{equation}
x_{n+1}\leq\sigma\max\{x_{n},x_{n-1}\} \label{maxn}%
\end{equation}
for all $n\geq0.$ Now, for every pair of initial values $x_{0},x_{1}\geq0$,
(\ref{maxn}) implies that
\[
x_{2},x_{3}\leq\sigma\max\{x_{0},x_{1}\}=\sigma\mu,\quad x_{4},x_{5}\leq
\sigma\max\{x_{2},x_{3}\}\leq\sigma^{2}\mu,\ldots
\]
and by induction,%
\[
x_{2n},x_{2n+1}\leq\sigma^{n}\mu.
\]

Therefore, $\lim_{n\rightarrow\infty}x_{n}=0$ and the proof is complete.
\end{proof}

\medskip

It is worth emphasizing that Part (b) of the above theorem is valid for all
values of the system parameters $b$, $a_{n}$, $s_{n}$ if the state-space
parameters $x_{0},x_{1}$ are suitably restricted. Extinction always occurs
when $\lambda>1$ if the population sizes are sufficiently low, irrespective of
the other system parameters. On the other hand, Part (c) is valid for all
non-negative values of the state-space parameters $x_{0},x_{1}$ if the system
parameters are sufficiently restricted. In this case extinction is inevitable
no matter what the initial population sizes are.

We define the\textit{ extinction region} of the system (\ref{1})-(\ref{2}) to
be the largest subset $E$ of $[0,\infty)\times\lbrack0,\infty)$ in which
extinction occurs; i.e. if $(x_{k},y_{k})\in E$ for some $k\geq0$ then
$(x_{n},y_{n})\in E$ for $n\geq k$ and $\lim_{n\rightarrow\infty}(x_{n}%
,y_{n})=(0,0).$ By the \textit{base component} of $E$ or the \textit{Allee
region} we mean the component (maximal connected subset) $E_{0}$ that contains
the origin in its boundary. In general, $E_{0}$ is a proper subset of $E$ as
it excludes other possible components of $E$ that are separated from the
origin. However, since nonzero orbits of (\ref{1})-(\ref{2}) do not map to
zero directly, all orbits converge to the origin by passing through $E_{0}$.
Thus all components of $E$ map into $E_{0}$.

The next result is an immediate consequence of Theorem \ref{bc0} for the
system (\ref{1})-(\ref{2}). Note that if $x_{n}<\mu$ for some $\mu>0$ and all
$n$ then by (\ref{yn})%
\[
y_{n}<\frac{\mu-s_{n}x_{n}}{s_{n}^{\prime}}\leq\frac{\mu}{s_{n}^{\prime}}%
\leq\frac{\mu}{\inf_{n\geq0}s_{n}^{\prime}}.
\]

\begin{corollary}
\label{gen}Assume that $\lambda>1$, $\inf_{n\geq0}s_{n}^{\prime}\doteq
\sigma>0$ and let $A,s,\rho$ be as defined in Theorem \ref{bc0}.

(a) The rectangle $[0,\rho)\times\lbrack0,\rho/\sigma)$ is an invariant set of
the system (\ref{1})-(\ref{2}). If $\{(x_{n},y_{n})\}$ is an orbit of this
system with a point $(x_{k},y_{k})$ in the rectangle for some $k\geq0$ then
$(x_{n},y_{n})\in\lbrack0,\rho)\times\lbrack0,\rho/\sigma)$ for $n\geq k$ and
the orbit $\{(x_{n},y_{n})\}$ converges to the origin. Thus $[0,\rho
)\times\lbrack0,\rho/\sigma)\subset E_{0}$.

(b) If (\ref{b}) holds then every orbit of the system (\ref{1})-(\ref{2}) in
the positive quadrant of the plane converges to the origin; i.e. the origin is
a global attractor of all orbits so $E_{0}=[0,\infty)\times\lbrack0,\infty).$
\end{corollary}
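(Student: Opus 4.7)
Both parts reduce to Theorem \ref{bc0} applied to the folded scalar equation (\ref{3}). An orbit $\{(x_n,y_n)\}$ of (\ref{1})-(\ref{2}) is determined by the scalar sequence $\{x_n\}$ solving (\ref{3}) with initial data $x_0$ and $x_1 = s_0 x_0 + s_0' y_0$, and $y_n$ is recovered from (\ref{yn}). Since $\sigma = \inf_n s_n' > 0$, the note preceding the corollary gives the uniform transfer $y_n \le x_{n+1}/\sigma$, which is what converts control of the $x$-coordinate into control of the $y$-coordinate.

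\textbf{Part (a).} I would take an orbit with $(x_k,y_k) \in [0,\rho) \times [0,\rho/\sigma)$ and apply Theorem \ref{bc0}(b) to the shifted scalar sequence starting at index $k$. That theorem requires $x_k,\,x_{k+1} < \rho$; the first is given, and the second, $x_{k+1} = s_k x_k + s_k' y_k < \rho$, is the crux of the invariance claim (addressed in the last paragraph). Once both values lie below $\rho$, Theorem \ref{bc0}(b) gives $x_n \to 0$, and in fact its proof shows that $x_n$ is bounded by a geometrically decreasing multiple of $\max\{x_k,x_{k+1}\}$, so $x_n<\rho$ for every $n \ge k$. Then (\ref{yn}) yields $y_n \le x_{n+1}/\sigma < \rho/\sigma$, so the rectangle is forward-invariant from index $k$ onward and the orbit tends to $(0,0)$. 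This establishes $[0,\rho) \times [0,\rho/\sigma) \subset E_0$.

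\textbf{Part (b).} This is direct. Inequality (\ref{b}) is exactly the hypothesis of Theorem \ref{bc0}(c), so every non-negative solution of (\ref{3}) converges to $0$ regardless of initial data. For arbitrary $(x_0,y_0) \in [0,\infty)^2$ the associated scalar sequence satisfies $x_n \to 0$, and (\ref{yn}) then gives $y_n \le x_{n+1}/\sigma \to 0$. Every orbit in the positive quadrant therefore converges to the origin, i.e.\ $E_0 = [0,\infty) \times [0,\infty)$.

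\textbf{Main obstacle.} The genuinely delicate step is the implication $(x_k<\rho,\; y_k<\rho/\sigma) \Rightarrow s_k x_k + s_k' y_k < \rho$ needed for invariance in part (a); a naive estimate gives only $s\rho + s_k'\rho/\sigma$, which can exceed $\rho$. The observation I would use to close the gap is the built-in calibration $\rho^{\lambda-1} e^A = 1 - s$ in the definition of $\rho$: for $k \ge 1$ the juvenile value $y_k$ is not arbitrary but produced by (\ref{2}) from $x_{k-1}$, so whenever $x_{k-1} < \rho$ we have $y_k \le x_{k-1}^\lambda e^A \le \rho^\lambda e^A = (1-s)\rho$, which forces $x_{k+1} \le s\rho + (1-s)\rho = \rho$. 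The base index $k = 0$, where $y_0$ is a free datum, requires restricting attention to those $(x_0,y_0)$ in the rectangle that additionally satisfy $s_0 x_0 + s_0' y_0 < \rho$. Once this invariance point is settled, the rest of the argument is routine bookkeeping with Theorem \ref{bc0} and (\ref{yn}).
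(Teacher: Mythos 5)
Your part (b) is correct and is exactly the paper's (implicit) argument: inequality (\ref{b}) is the hypothesis of Theorem \ref{bc0}(c), so $x_{n}\rightarrow0$ for the folded equation (\ref{3}) from any non-negative data, and (\ref{yn}) transfers this to $y_{n}$. For part (a) you have also put your finger on precisely the step that the paper treats as ``immediate'' and never writes down: membership of $(x_{k},y_{k})$ in $[0,\rho)\times\lbrack0,\rho/\sigma)$ gives $x_{k}<\rho$ but only $x_{k+1}=s_{k}x_{k}+s_{k}^{\prime}y_{k}<s\rho+s_{k}^{\prime}\rho/\sigma$, and since $s_{k}^{\prime}/\sigma\geq1$ this exceeds $\rho$ whenever $s>0$ or $s_{n}^{\prime}$ is non-constant. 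The note preceding the corollary only proves the reverse implication (control of all $x_{n+1}$ forces $y_{n}<\rho/\sigma$), so the rectangle is the natural \emph{target} of orbits tamed by Theorem \ref{bc0}(b), not an a priori invariant set. Your diagnosis here is sharper than the paper's own treatment.

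However, your repair does not close the gap. For $k\geq1$ you bound $y_{k}$ via (\ref{2}) in terms of $x_{k-1}$, but the hypothesis says nothing about $x_{k-1}$, so the chain $y_{k}\leq x_{k-1}^{\lambda}e^{A}\leq\rho^{\lambda}e^{A}$ is unavailable (also, since $a_{n}=r_{n-1}+\ln s_{n}^{\prime}$, the correct calibration is $s_{k}^{\prime}y_{k}\leq x_{k-1}^{\lambda}e^{A}$, not $y_{k}\leq x_{k-1}^{\lambda}e^{A}$); and for $k=0$ you concede that the extra hypothesis $s_{0}x_{0}+s_{0}^{\prime}y_{0}<\rho$ must be imposed, which is proving a different statement. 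In fact the corollary as printed appears to be false when $s>0$. Take $\lambda=2$, $b=0$, $s_{n}\equiv1/2$, $s_{n}^{\prime}\equiv1/2$, $a_{n}\equiv1$, so that $\rho=e^{-1}/2\approx0.184$ and the rectangle is $[0,0.184)\times\lbrack0,0.368)$. The point $(x_{0},y_{0})=(0.18,0.36)$ lies in it, yet $x_{1}=0.27>\rho$, and iterating $x_{n+1}=\tfrac{1}{2}x_{n}+x_{n-1}^{2}e^{1-x_{n-1}}$ produces two consecutive terms ($x_{12}\approx0.262$, $x_{13}\approx0.272$) above the Allee fixed point $x^{\ast}\approx0.232$; since $y\mapsto y^{2}e^{1-y}$ is increasing on $(0,2)$ and $x^{\ast}=\tfrac{1}{2}x^{\ast}+(x^{\ast})^{2}e^{1-x^{\ast}}$, all later terms remain above $x^{\ast}$, so this orbit does not converge to the origin. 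The convergence claim is sound exactly when the hypothesis guarantees $x_{k},x_{k+1}<\rho$ --- e.g.\ if the second factor of the rectangle is shrunk to $[0,(1-s)\rho/\sup_{n}s_{n}^{\prime})$, or in the special case $s=0$ with $s_{n}^{\prime}\equiv\sigma$ --- after which your appeal to Theorem \ref{bc0}(b) and (\ref{yn}) goes through as you describe.
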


\section{The Allee effect and extinction in the autonomous case}

To understand the role of inter-stage interaction in modifying the Allee
effect and the extinction region with minimum diversion, we assume (unless
otherwise stated) that all parameters are time independent; i.e. $a_{n}%
=a\in(-\infty,\infty)$, $s_{n}=s\in\lbrack0,1)$ and \thinspace$s_{n}^{\prime
}=s^{\prime}\in(0,1]$ are constants for all $n$. Then (\ref{3}) reduces to the
autonomous equation%
\begin{equation}
x_{n+1}=sx_{n}+x_{n-1}^{\lambda}e^{a-bx_{n}-x_{n-1}}. \label{4a}%
\end{equation}

\subsection{The fixed points}

The fixed points or equilibrium solutions of (\ref{4a}) are important to the
subsequent discussion. They are the roots of the equation%
\begin{equation}
x=sx+x^{\lambda}e^{a-(b+1)x} \label{5}%
\end{equation}

Clearly zero is a solution of (\ref{5}), representing the extinction
equilibrium in the biological context. The nonzero roots of (\ref{5}) are the
solutions of
\begin{equation}
1-s=x^{\lambda-1}e^{a-(b+1)x}\doteq h(x). \label{5a}%
\end{equation}

The derivative of $h$ is
\[
h^{\prime}(x)=\frac{e^{a-(b+1)x}[\lambda-1-(b+1)x]}{x^{2-\lambda}}%
\]

If $\lambda>1$ then $h^{\prime}$ has a unique positive zero at which $h$ is
maximized:
\[
x_{\max}=\frac{\lambda-1}{b+1}%
\]

Now
\[
h(x_{\max})=\left(  \frac{\lambda-1}{b+1}\right)  ^{\lambda-1}e^{a-(\lambda
-1)}\geq1-s
\]

if and only if
\begin{equation}
a\geq\ln(1-s)+(\lambda-1)[1+\ln(b+1)-\ln(\lambda-1)]. \label{fxp}%
\end{equation}

The next result summarizes the preceding discussion.

\begin{lemma}
\label{2fp}Assume that $\lambda>1$.

(a) If (\ref{fxp}) holds with strict inequality then (\ref{4a}) has two fixed
points $x^{\ast}$ and $\bar{x}$ that satisfy%
\begin{equation}
0<x^{\ast}<\frac{\lambda-1}{b+1}<\bar{x}. \label{6a}%
\end{equation}

(b) If (\ref{fxp}) holds with equality then (\ref{4a}) has a unique positive
fixed point
\[
x^{\ast}=x_{\max}=\frac{\lambda-1}{b+1}.
\]

(c) If (\ref{fxp}) does not hold, i.e.%
\begin{equation}
a<\ln(1-s)+(\lambda-1)[1+\ln(b+1)-\ln(\lambda-1)] \label{nfxp}%
\end{equation}
then (\ref{4a}) has no positive fixed points.
\end{lemma}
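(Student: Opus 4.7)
The plan is to read off the fixed-point count directly from the shape of the function $h(x) = x^{\lambda-1} e^{a-(b+1)x}$ on $(0,\infty)$, which the preceding paragraphs have essentially already determined. Since the nonzero fixed points of (\ref{4a}) are precisely the solutions of $h(x) = 1-s$, the task reduces to counting intersections of the graph of $h$ with the horizontal line at height $1-s$.

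First I would verify the qualitative shape of $h$. Because $\lambda > 1$, we have $h(0^+) = 0$ and $h(x) \to 0$ as $x \to \infty$ (the exponential factor dominates the polynomial). The derivative
\[
h'(x) = \frac{e^{a-(b+1)x}\,[\lambda - 1 - (b+1)x]}{x^{2-\lambda}}
\]
already computed in the text has a unique zero on $(0,\infty)$ at $x_{\max} = (\lambda-1)/(b+1)$, is positive on $(0,x_{\max})$, and negative on $(x_{\max},\infty)$. So $h$ is strictly increasing on $(0,x_{\max})$, strictly decreasing on $(x_{\max},\infty)$, and attains its unique maximum value
\[
h(x_{\max}) = \left(\frac{\lambda-1}{b+1}\right)^{\lambda-1} e^{a-(\lambda-1)}
\]
at $x_{\max}$.

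Next I would use the equivalence, also recorded before the lemma, that $h(x_{\max}) \geq 1-s$ if and only if (\ref{fxp}) holds, and analogously for strict inequality and equality. The three parts of the lemma then follow from a single intermediate-value argument applied on each of the monotone branches of $h$. If (\ref{fxp}) is strict, then $1-s < h(x_{\max})$, so by continuity and strict monotonicity there is exactly one root $x^*$ in $(0,x_{\max})$ and exactly one root $\bar x$ in $(x_{\max},\infty)$, giving (\ref{6a}). If (\ref{fxp}) holds with equality, then $1-s = h(x_{\max})$ is attained only at the single point $x_{\max}$, which becomes the unique positive fixed point. If (\ref{nfxp}) holds, then $1-s > h(x_{\max}) \geq h(x)$ for all $x>0$, so the equation $h(x) = 1-s$ has no positive solution and (\ref{4a}) admits no positive fixed point.

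There is no real obstacle here; the lemma is essentially a bookkeeping summary of the calculus already performed in the lead-up. The only point requiring slight care is making explicit the strict-inequality versus equality dichotomy so that the two distinct roots in part (a) coalesce cleanly into the single root of part (b), which is handled simply by observing that strict monotonicity on each side of $x_{\max}$ gives at most one solution per branch.
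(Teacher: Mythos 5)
Your proposal is correct and follows essentially the same route as the paper, which presents Lemma \ref{2fp} as a summary of the preceding calculus on $h(x)=x^{\lambda-1}e^{a-(b+1)x}$: unique maximum at $x_{\max}=(\lambda-1)/(b+1)$, the equivalence of $h(x_{\max})\geq 1-s$ with (\ref{fxp}), and an intersection count with the level $1-s$. Your only additions are the explicit boundary limits $h(0^{+})=0$ and $h(x)\to 0$ as $x\to\infty$ needed for the intermediate-value step, which the paper leaves implicit.
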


Note that if $x^{\ast},\bar{x}$ are fixed points of (\ref{4a}) then the fixed
points of the system (\ref{1})-(\ref{2}) are obtained using (\ref{yn}) as%
\[
\left(  x^{\ast},\frac{1-s}{s^{\prime}}x^{\ast}\right)  ,\quad\left(  \bar
{x},\frac{1-s}{s^{\prime}}\bar{x}\right)  .
\]

We refer to the first of the above fixed points as well as the $x^{\ast}$
itself as the \textit{Allee fixed point or equilibrium}. The next result
proves the interesting fact that the value of this fixed point increases (it
moves away from the origin) as the interaction parameter $b$ increases.

\begin{lemma}
\label{bux}If $\lambda>1$ then the Allee fixed point $x^{\ast}$ is an
increasing function of $b$ and its minimum value (with all other parameter
values fixed) occurs at $b=0$ as long as (\ref{fxp}) holds with strict inequality.
\end{lemma}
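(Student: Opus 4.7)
The plan is to view $x^\ast$ implicitly as a function of $b$ via the defining equation $h(x;b)\doteq x^{\lambda-1}e^{a-(b+1)x}=1-s$ and compute $dx^\ast/db$ using the implicit function theorem. Writing $F(x,b)=x^{\lambda-1}e^{a-(b+1)x}-(1-s)$, the Allee equilibrium satisfies $F(x^\ast,b)=0$, and whenever (\ref{fxp}) holds with strict inequality Lemma \ref{2fp} guarantees $x^\ast\in\bigl(0,(\lambda-1)/(b+1)\bigr)$, i.e.\ $x^\ast$ lies strictly on the increasing branch of $h(\cdot;b)$.

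First I would verify that the implicit function theorem applies. The $x$-derivative is
\[
\frac{\partial F}{\partial x}(x^\ast,b)=h'(x^\ast)=\frac{e^{a-(b+1)x^\ast}\bigl[\lambda-1-(b+1)x^\ast\bigr]}{(x^\ast)^{2-\lambda}}>0,
\]
since $x^\ast<(\lambda-1)/(b+1)$. Therefore $x^\ast$ is a well-defined, differentiable function of $b$ on the open set of parameter values for which (\ref{fxp}) is strict. Next, I would compute
\[
\frac{\partial F}{\partial b}(x^\ast,b)=-x^\ast\,x^{\ast\,\lambda-1}e^{a-(b+1)x^\ast}=-x^\ast(1-s)<0,
\]
using $h(x^\ast;b)=1-s$.

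Combining these, the implicit function theorem yields
\[
\frac{dx^\ast}{db}=-\frac{\partial F/\partial b}{\partial F/\partial x}=\frac{x^\ast(1-s)}{h'(x^\ast)}>0,
\]
so $x^\ast$ is strictly increasing in $b$. Consequently its minimum over $b\in[0,\infty)$ (with the remaining parameters held fixed and subject to (\ref{fxp}) being strict) is attained at $b=0$, which is the desired conclusion.

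I do not expect any genuine obstacle here; the only point requiring care is the qualifier ``as long as (\ref{fxp}) holds with strict inequality,'' which ensures both that $x^\ast$ is a simple root of $h(\cdot;b)-(1-s)$ (so $h'(x^\ast)\neq 0$) and that $x^\ast$ stays on the increasing branch where the sign analysis above is valid. One could alternatively avoid the implicit function theorem altogether by the monotone-comparison argument: for fixed $x>0$, $h(x;b)$ is strictly decreasing in $b$ (since $\partial h/\partial b=-xh<0$), while $h(\cdot;b)$ is strictly increasing on $(0,(\lambda-1)/(b+1))$; thus increasing $b$ lowers the graph of $h$ on this branch and pushes the level set $\{h=1-s\}$ to the right. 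Either approach gives the result in a few lines.
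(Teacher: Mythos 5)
Your proposal is correct and follows essentially the same route as the paper: implicit differentiation of the equilibrium equation $h(x^\ast)=1-s$ with respect to $b$, using $x^\ast<(\lambda-1)/(b+1)$ from Lemma \ref{2fp} to fix the sign of the derivative (indeed your expression $x^\ast(1-s)/h'(x^\ast)$ simplifies to the paper's $(x^\ast)^2/[\lambda-1-(b+1)x^\ast]$). Your explicit check that $\partial F/\partial x\neq 0$, and the sketched monotone-comparison alternative, are welcome refinements but do not change the substance of the argument.
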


\begin{proof}
Since $x^{\ast}$ satisfies (\ref{5a}) taking the logarithm yields%
\[
\ln(1-s)=(\lambda-1)\ln x^{\ast}+a-(b+1)x^{\ast}%
\]

Thinking of this equation as defining $x^{\ast}$ as a function of $b,$ we take
the derivative with respect to $b$ to find that%
\[
0=\frac{\lambda-1}{x^{\ast}}\frac{dx^{\ast}}{db}-x^{\ast}-(b+1)\frac{dx^{\ast
}}{db}%
\]
which yields%
\[
\frac{dx^{\ast}}{db}=\frac{\left(  x^{\ast}\right)  ^{2}}{\lambda
-1-(b+1)x^{\ast}}%
\]

This equality and (\ref{6a}) imply that $dx^{\ast}/db>0$ and the proof is complete.
\end{proof}

\medskip

We close this section with a discussion of local stability of the origin and
the Allee fixed point $x^{\ast}.$ Let%
\[
F(x,y)=sx+y^{\lambda}e^{a-bx-y}%
\]

For each fixed point of (\ref{4a}) the eigenvalues of the linearization are
the roots of the characteristic equation%
\[
u^{2}-F_{x}u-F_{y}=0
\]
where%
\begin{equation}
F_{x}=s-by^{\lambda}e^{a-bx-y},\qquad F_{y}=(\lambda-y)y^{\lambda-1}%
e^{a-bx-y}\label{6b}%
\end{equation}

\begin{lemma}
\label{xst}Assume that $\lambda>1$ in (\ref{4a}).

(a) The origin is locally asymptotically stable;

(b) If (\ref{fxp}) holds then the positive fixed point $x^{\ast}$ is unstable;

(c) If (\ref{fxp}) holds and $x^{\ast}>s/b(1-s)$ (e.g. if $s=0$) then
$x^{\ast}$ is a repelling node.
\end{lemma}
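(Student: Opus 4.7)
The plan is to linearize (\ref{4a}) by regarding it as the planar map $(x_{n-1},x_n)\mapsto(x_n,F(x_n,x_{n-1}))$ and examining the roots of the characteristic polynomial $\phi(u)=u^{2}-F_{x}u-F_{y}$ at each fixed point, using the formulas in (\ref{6b}). Part (a) is immediate: at $(0,0)$ the factor $y^{\lambda-1}$ appearing in $F_{y}$ vanishes because $\lambda>1$, so $F_{x}(0,0)=s$ and $F_{y}(0,0)=0$. The characteristic polynomial $\phi(u)=u^{2}-su$ has roots $0$ and $s$, both lying in $[0,1)$, so the linearization is a contraction and the origin is locally asymptotically stable.

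For part (b) the key simplification is to exploit the fixed-point identity $(x^{*})^{\lambda-1}e^{a-(b+1)x^{*}}=1-s$ from (\ref{5a}) to evaluate (\ref{6b}) at $(x^{*},x^{*})$, which collapses the exponential factors and yields
\[
F_{x}=s-bx^{*}(1-s),\qquad F_{y}=(\lambda-x^{*})(1-s).
\]
I would then compute $\phi(1)=1-F_{x}-F_{y}=(1-s)[(b+1)x^{*}-(\lambda-1)]$, which is strictly negative because $x^{*}<(\lambda-1)/(b+1)$ by (\ref{6a}). Since $x^{*}<\lambda-1<\lambda$ also makes $F_{y}>0$, the discriminant $F_{x}^{2}+4F_{y}$ is positive and the eigenvalues $u_{1},u_{2}$ are real; the upward-opening parabola $\phi$ therefore has one root larger than $1$, so $x^{*}$ is unstable. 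Note that $u_{1}u_{2}=-F_{y}<0$ already forces opposite signs, so in fact $u_{2}<0<1<u_{1}$.

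For part (c) the hypothesis $x^{*}>s/[b(1-s)]$ is precisely the assertion $F_{x}<0$, so the trace $u_{1}+u_{2}=F_{x}$ is negative. Combined with the sign information $u_{2}<0<u_{1}$ from part (b), the negative trace forces $|u_{2}|>u_{1}$, and since $u_{1}>1$ from part (b) we obtain $|u_{2}|>u_{1}>1$. Both real eigenvalues therefore exceed $1$ in modulus, which is the definition of a repelling node. I expect the main obstacle to be the algebraic bookkeeping in part (b): applying the fixed-point identity carefully so that the exponential factors cancel in both partial derivatives, and then recognising that the right test value is $u=1$ alone (rather than $u=\pm 1$), so that the inequalities in (\ref{6a}) instantly deliver the sign of $\phi(1)$. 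Once that is done, the hypothesis of (c) drops into the same framework via the trace, with no additional estimates needed.
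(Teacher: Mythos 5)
Your proof is correct and follows the same overall strategy as the paper: linearize at each fixed point, use the fixed-point identity (\ref{5a}) to reduce the Jacobian entries (\ref{6b}) to $F_{x}=s-(1-s)bx^{\ast}$ and $F_{y}=(1-s)(\lambda-x^{\ast})$, and then locate the roots of $u^{2}-F_{x}u-F_{y}$; part (a) is identical. Where you differ is the root-location step in (b) and (c). The paper writes the two eigenvalues $\eta^{\pm}(x^{\ast})$ explicitly via the quadratic formula, bounds the discriminant from below by the perfect square $[1+(1-s)(1+bx^{\ast})]^{2}$ using $\lambda-x^{\ast}>1+bx^{\ast}$ from (\ref{6a}), and reads off $\eta^{+}>1$ and $\eta^{-}<-(1-s)(1+bx^{\ast})$, the latter being $<-1$ exactly under the hypothesis of (c). You instead evaluate the characteristic polynomial at $u=1$, obtaining $\phi(1)=(1-s)[(b+1)x^{\ast}-(\lambda-1)]<0$, which together with the negative product $-F_{y}$ of the roots pins them as $u_{2}<0<1<u_{1}$; the hypothesis of (c) is precisely $F_{x}<0$, i.e.\ negative trace, which forces $|u_{2}|>u_{1}>1$. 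Your version avoids all manipulation of the radical and is arguably cleaner, while the paper's version yields an explicit quantitative bound on $\eta^{-}$ that makes the threshold $x^{\ast}>s/b(1-s)$ transparent. One shared caveat: both arguments tacitly use the strict form of (\ref{fxp}) so that (\ref{6a}) is strict; in the equality case $u=1$ is an eigenvalue and the linearization is inconclusive, but that applies equally to the paper's proof.
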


\begin{proof}
(a) By (\ref{6b}), $F_{x}(0,0)=s$ and $F_{y}(0,0)=0$ so the roots $0,s$
of the characteristic equation $u^{2}-su=0$ are in the interval 
$(-1,1)$.

(b) By (\ref{5a}) and (\ref{6b}) the characteristic equation of the
linearization at $(x^{\ast},x^{\ast})$ is
\[
u^{2}-[s-(1-s)bx^{\ast}]u-(1-s)(\lambda-x^{\ast})=0
\]
whose roots, or eigenvalues are%
\begin{equation}
\eta^{\pm}(x^{\ast})=\frac{s-(1-s)bx^{\ast}\pm\sqrt{(s-(1-s)bx^{\ast}%
)^{2}+4(1-s)(\lambda-x^{\ast})}}{2}\label{lpm}%
\end{equation}

Note that $\eta^{\pm}(x^{\ast})$ are both real because (\ref{6a}) implies
$\lambda-x^{\ast}>1+bx^{\ast}$ so the expression under the square root in
$\eta^{\pm}(x^{\ast})$ satisfies
\begin{align*}
\lbrack s-(1-s)bx^{\ast}]^{2}+4(1-s)(\lambda-x^{\ast})  & >[1-(1-s)(1+bx^{\ast
})]^{2}+4(1-s)(1+bx^{\ast})\\
& =1+2(1-s)(1+bx^{\ast})+(1-s)^{2}(1+bx^{\ast})^{2}\\
& =[1+(1-s)(1+bx^{\ast})]^{2}%
\end{align*}
which is non-negative. Further, this calculation also shows that
\begin{equation}
\eta^{+}(x^{\ast})>\frac{s-(1-s)bx^{\ast}+1+(1-s)(1+bx^{\ast})}{2}%
=1\label{lg1}%
\end{equation}
so $x^{\ast}$ is unstable. 

(c) From the calculations above we also have the following%
\[
\eta^{-}(x^{\ast})<\frac{s-(1-s)bx^{\ast}-[1+(1-s)(1+bx^{\ast})]}%
{2}=-(1-s)(1+bx^{\ast})
\]

Therefore, $\eta^{-}(x^{\ast})<-1$ if $(1-s)(1+bx^{\ast})>1$ which may be
written as $x^{\ast}>s/b(1-s)$ to complete the proof.
\end{proof}

\medskip

\subsection{The Allee effect without inter-stage interaction}

Before examining the effect of inter-stage interaction on extinction it is
useful, for the sake of comparison, to examine the case where such interaction
does not occur. The parameters that link the two stages are $s_{n}$ and $b.$
The latter configures inter-stage interaction directly into the model while
the former does this less directly by allowing a fraction of adults to survive
into the next period and thus interact with the next generation's juveniles.
To remove all inter-stage interaction we set $s_{n}=0$ for all $n$ (e.g. the
case of a semelparous species)\ and also set $b=0.$ Therefore, (\ref{4a})
reduces to the equation%
\begin{equation}
x_{n+1}=x_{n-1}^{\lambda}e^{a-x_{n-1}}\label{3aa}%
\end{equation}

The even and odd terms of a solution of this second-order equation separately
satisfy the first-order difference equation%
\begin{equation}
u_{n+1}=u_{n}^{\lambda}e^{a-u_{n}} \label{3a}%
\end{equation}
that has been studied in some detail; see \cite{es}, \cite{elaydi}. In this
case, the population of each stage evolves separately as a single-species
population according to (\ref{3a}). Specifically, if $\{x_{n}\}$ is a solution
of (\ref{3aa}) with given initial values $x_{1},x_{0}\geq0$ then the odd terms
satisfy%
\[
x_{2n+1}=x_{2n-1}^{\lambda}e^{a-x_{2n-1}}%
\]
so $x_{2n+1}=u_{n}$ where $\{u_{n}\}$ is a solution of (\ref{3a}) with initial
value $u_{0}=x_{1}$; similarly, the even terms are $x_{2n}=u_{n}$ where
$\{u_{n}\}$ is a solution of (\ref{3a}) with initial value $u_{0}=x_{0}$.

We summarize a few of the well-known properties of (\ref{3a}) as a lemma which
we state without proof here.

\begin{lemma}
\label{ord1}Let $\lambda>1$ and define
\begin{equation}
f(u)=u^{\lambda}e^{a-u}. \label{fmap}%
\end{equation}

(a) The mapping $f$ has no positive fixed points if and only if%
\begin{equation}
\;a<(\lambda-1)[1-\ln(\lambda-1)]. \label{ac0}%
\end{equation}

(b) If%
\begin{equation}
a>(\lambda-1)[1-\ln(\lambda-1)] \label{fup}%
\end{equation}
then $f$ has positive fixed points $u^{\ast}$ and $\bar{u}$ such that
$u^{\ast}<\lambda-1<\bar{u}.$ Further, $u^{\ast}=\bar{u}=\lambda-1$ if and
only if the inequality in (\ref{fup}) is replaced with equality. We may call
$u^{\ast}$ the Allee fixed point of $f$.

(c) If
\begin{equation}
a\leq\lambda-(\lambda-1)\ln\lambda\label{ubar}%
\end{equation}
then the interval $(u^{\ast},\bar{u})$ is invariant under $f$ with $\bar
{u}<f(\lambda)\leq\lambda$. Further, $f$ is strictly increasing on this
interval with $f(u)>u$ for $u\in(u^{\ast},\bar{u})$.

(d) Assume that (\ref{fup}) holds. Then $u^{\ast}$ is unstable but $\bar{u}$
is asymptotically stable if (\ref{ubar}) holds. Further, if $\{u_{n}\}$ is a
solution of (\ref{3a}) or equivalently, of $u_{n+1}=f(u_{n})$ with $u_{0}%
\in(u^{\ast},\bar{u})$ then $\{u_{n}\}=\{f^{n}(u_{0})\}$ is increasing with
$\lim_{n\rightarrow\infty}u_{n}=\bar{u}.$

(e) Assume that (\ref{ac0}) does not hold, i.e.
\begin{equation}
a\geq(\lambda-1)[1-\ln(\lambda-1)]. \label{nac0}%
\end{equation}

Then $f(u)<u$ for $u\in(0,u^{\ast})$ and if $\{u_{n}\}=\{f^{n}(u_{0})\}$ is a
solution of (\ref{3a}) with $u_{0}<u^{\ast}$ then $\{u_{n}\}$ is decreasing
with $\lim_{n\rightarrow\infty}u_{n}=0.$

(f) If (\ref{fup}) holds and $u^{\ast}$ is the Allee fixed point of $f$ then
there is a unique fixed point $u_{\ast}>\lambda$ such that $f(u_{\ast
})=u^{\ast}$. If $u_{0}>u_{\ast}$ then $u_{n}<u^{\ast}$ for $n\geq1$ so
$\lim_{n\rightarrow\infty}f^{n}(u_{0})=0.$
\end{lemma}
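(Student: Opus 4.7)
The plan is to reduce everything to the shape of two auxiliary functions on $(0,\infty)$: the map itself $f(u)=u^{\lambda}e^{a-u}$ and the quotient $g(u)=f(u)/u=u^{\lambda-1}e^{a-u}$, whose level set $g(u)=1$ is exactly the set of positive fixed points of $f$. Direct differentiation gives $g'(u)=u^{\lambda-2}e^{a-u}[(\lambda-1)-u]$ and $f'(u)=u^{\lambda-1}e^{a-u}[\lambda-u]$, so both are strictly unimodal on $(0,\infty)$ with peaks at $\lambda-1$ and $\lambda$ respectively, vanishing at both endpoints. All parts of the lemma then amount to reading off where the horizontal line $y=1$ meets the bump of $g$, combined with the standard monotone-orbit argument for a unimodal map.

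For parts (a) and (b), the maximum value $g(\lambda-1)=(\lambda-1)^{\lambda-1}e^{a-\lambda+1}$ is strictly less than $1$ precisely when (\ref{ac0}) holds, ruling out positive fixed points; reversing the strict inequality, unimodality of $g$ and the intermediate value theorem produce exactly two roots $u^{*}<\lambda-1<\bar{u}$, coalescing at $\lambda-1$ when equality holds. For (c), condition (\ref{ubar}) rewrites as $g(\lambda)\le 1$, which, since $g$ is decreasing beyond $\lambda-1$, forces $\bar{u}\le \lambda$; on $(u^{*},\bar{u})\subset(0,\lambda)$ the map $f$ is strictly increasing, and $g(u)>1$ there gives $u<f(u)<f(\bar{u})=\bar{u}$, yielding invariance together with the bound $\bar{u}<f(\lambda)\le \lambda$.

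For parts (d)--(f), the crucial observation is that at any positive fixed point the identity $u^{\lambda-1}e^{a-u}=1$ holds, whence $f'(u)=\lambda-u$. Thus $f'(u^{*})>1$ makes $u^{*}$ unstable, while under (\ref{ubar}) the relation $\lambda-1<\bar{u}\le\lambda$ gives $f'(\bar{u})\in[0,1)$ and hence local asymptotic stability; orbits in $(u^{*},\bar{u})$ are monotone increasing and bounded above by $\bar{u}$, so they converge to the unique fixed point in that interval, namely $\bar{u}$. For (e), $g(u)<1$ on $(0,u^{*})$ gives $f(u)<u$, producing a positive decreasing orbit whose limit must be the unique fixed point in $[0,u^{*})$, namely $0$. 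For (f), $f$ is strictly decreasing on $(\lambda,\infty)$, with $f(\lambda)>f(\lambda-1)>\lambda-1>u^{*}$ (since $f$ is increasing on $(0,\lambda)$ and $g(\lambda-1)>1$) and $f(u)\to 0$ as $u\to\infty$, yielding a unique $u_{*}>\lambda$ with $f(u_{*})=u^{*}$; then $u_{0}>u_{*}$ gives $u_{1}=f(u_{0})<u^{*}$ and convergence to $0$ follows from (e).

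The main obstacle is not a deep step but the bookkeeping: under each hypothesis one has to keep straight the ordering of $u^{*}$, $\lambda-1$, $\bar{u}$, and $\lambda$ so that the derivative bounds and the monotonicity arguments all fire correctly. The one inequality that is not immediate, $f(\lambda)>u^{*}$ in part (f), is chained out of $f(\lambda)>f(\lambda-1)>\lambda-1>u^{*}$ via monotonicity of $f$ on $(0,\lambda)$ together with $g(\lambda-1)>1$.
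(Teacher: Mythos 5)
Your proof is correct and is essentially the standard argument: the paper states Lemma \ref{ord1} without proof (deferring to the cited literature), and your reduction to the unimodal quotient $g(u)=u^{\lambda-1}e^{a-u}$ together with the fixed-point identity $f'(u)=\lambda-u$ is exactly the expected route, with the ordering $u^{\ast}<\lambda-1<\bar{u}\leq\lambda$ correctly feeding the monotonicity, stability and basin arguments in parts (c)--(f). The only quibble concerns the statement rather than your argument: in the boundary case $a=\lambda-(\lambda-1)\ln\lambda$ one gets $\bar{u}=\lambda=f(\lambda)$, so the strict inequality $\bar{u}<f(\lambda)$ in part (c) should read $\bar{u}\leq f(\lambda)$, which is what your monotonicity argument actually delivers.
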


\medskip

\begin{corollary}
\label{nic}In the system (\ref{1})-(\ref{2}) assume that $\lambda>1$, $b=0$,
$s_{n}=0$ for all $n$ and further, $s_{n}^{\prime}=s^{\prime}$ and $r_{n}=r$
are constants.

(a) The origin is a global attractor of all orbits $\{(x_{n},y_{n})\}$ of
(\ref{1})-(\ref{2}), i.e. $E_{0}=[0,\infty)\times\lbrack0,\infty)$ if and only
if (\ref{ac0}) holds for $a=r+\ln s^{\prime}$.

(b) Assume that $a=r+\ln s^{\prime}$ satisfies (\ref{fup}). Then $x^{\ast
}=u^{\ast}$ and $\bar{x}=\bar{u}$ where $u^{\ast},\bar{u}$ are the fixed
points of the mapping $f$ in (\ref{fmap}) and every orbit of (\ref{1}%
)-(\ref{2}) with initial point $(x_{0},y_{0})\in\lbrack0,u^{\ast}%
)\times\lbrack0,u^{\ast}/s^{\prime})$ converges to the origin, i.e.
$E_{0}\subset\lbrack0,u^{\ast})\times\lbrack0,u^{\ast}/s^{\prime}).$

(c) Assume that $a=r+\ln s^{\prime}$ satisfies (\ref{fup}) and (\ref{ubar}).
If $x_{0}\in\lbrack u^{\ast},\bar{u}]$ or$\ s^{\prime}y_{0}=x_{1}\in\lbrack
u^{\ast},\bar{u}]$ then the corresponding orbit of (\ref{1})-(\ref{2}) does
not converge to the origin. Thus $E_{0}=[0,u^{\ast})\times\lbrack0,u^{\ast
}/s^{\prime})$ in this case.

(d) If $a=r+\ln s^{\prime}$ satisfies (\ref{fup}) then the periodic sequences
\[
\{0,u^{\ast},0,u^{\ast},\ldots\},\quad\{u^{\ast},0,u^{\ast},0\ldots\}
\]
are unstable solutions of (\ref{3aa}).
\end{corollary}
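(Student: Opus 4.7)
The plan is to exploit the fact that, with $s_n = 0$ and $b = 0$, the folded equation (\ref{3aa}) takes the form $x_{n+1} = f(x_{n-1})$, so the even and odd subsequences of any solution evolve independently as orbits of the one-dimensional map $f$ of (\ref{fmap}); hence every part of the corollary reduces to applying Lemma \ref{ord1} to two independent iterations. From (\ref{yn}) with $s_n = 0$ we have $y_n = x_{n+1}/s'$, so the planar orbit converges to the origin if and only if $x_n \to 0$, which in turn holds if and only if both $\{f^n(x_0)\}$ and $\{f^n(x_1)\}$ converge to $0$, where $x_1 = s' y_0$. I would also note that with $b = 0, s = 0$, equation (\ref{5a}) reduces to the defining equation $u^{\lambda-1} e^{a-u} = 1$ of positive fixed points of $f$, identifying $x^* = u^*$ and $\bar{x} = \bar{u}$ as claimed in part (b).

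For part (a), if (\ref{ac0}) holds then Lemma \ref{ord1}(a) produces no positive fixed points of $f$, which combined with continuity and $f(0) = 0$ forces $f(u) < u$ for all $u > 0$; hence $\{f^n(u_0)\}$ is monotonically decreasing and bounded below by $0$, so it converges to the unique fixed point $0$. For the converse, if (\ref{ac0}) fails then Lemma \ref{ord1}(b) supplies a fixed point $u^* > 0$ of $f$ and the initial point $(u^*, u^*/s')$ yields a nonzero constant orbit. Part (b) follows by applying Lemma \ref{ord1}(e) to each subsequence, since $x_0, x_1 < u^*$ and (\ref{fup}) implies (\ref{nac0}). Part (d) is a one-line verification: substituting $x_0 = 0, x_1 = u^*$ into (\ref{3aa}) gives $x_2 = 0^{\lambda} e^a = 0$ and $x_3 = f(u^*) = u^*$, exhibiting the 2-cycle; instability follows because Lemma \ref{ord1}(d) declares $u^*$ unstable for $f$, so a small perturbation of $x_1$ drives the odd subsequence away from $u^*$ (toward $\bar u$ or $0$, via parts (d)-(e) of Lemma \ref{ord1}) while the even subsequence stays pinned at $0$; the other 2-cycle is handled symmetrically.

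Part (c) is the substantive piece and contains the main obstacle. Under the additional hypothesis (\ref{ubar}), Lemma \ref{ord1}(c)-(d) ensures the interval $[u^*, \bar u]$ is forward-invariant under $f$, with iterates interior to it strictly increasing toward $\bar u$, so $f^n(u_0) \not\to 0$ for any $u_0 \in [u^*, \bar u]$. This immediately yields the first assertion of (c): if either $x_0$ or $x_1$ lies in $[u^*, \bar u]$, the corresponding subsequence fails to converge to $0$. The subtle part is the equality $E_0 = [0, u^*) \times [0, u^*/s')$. Because the dynamics decouple, the full extinction set factors as a product $E_f \times (E_f / s')$ with $E_f = \{u \ge 0 : f^n(u) \to 0\}$, and Lemma \ref{ord1}(f) shows $E_f$ has further components at large values of $u$ (points $u > u_*$ that get kicked into $[0, u^*)$ by one application of $f$). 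These remote components produce components of $E$ separated from the origin by the forbidden strips $\{x_0 \in [u^*, \bar u]\}$ and $\{x_1 \in [u^*, \bar u]\}$, so the connected component $E_0$ cannot extend past them. The main obstacle is thus not any single estimate but the connectedness bookkeeping that combines part (b) with the strip-obstruction, and this is precisely where hypothesis (\ref{ubar}) is essential: it prevents overshooting in the one-dimensional dynamics so that the middle strip is a clean barrier rather than partially swept to $0$.
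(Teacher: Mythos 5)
Your proposal is correct and follows essentially the same route as the paper: with $s_{n}=0$ and $b=0$ the folded equation decouples into two independent orbits of the one-dimensional map $f$, each part then reduces to the corresponding part of Lemma \ref{ord1}, and instability in (d) is obtained by a small downward perturbation handled by Lemma \ref{ord1}(e). The only difference is that you make explicit two steps the paper leaves implicit, namely that the absence of positive fixed points forces $f(u)<u$ for all $u>0$ in part (a), and the connectedness bookkeeping that upgrades the strip obstruction to the equality $E_{0}=[0,u^{\ast})\times[0,u^{\ast}/s^{\prime})$ in part (c).
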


\begin{proof}
(a) Both the even terms and the odd terms of every non-negative solution of
(\ref{3aa}) converge to 0 by Lemma \ref{ord1}(a). Conversely, if every orbit
of the system\ converges to the origin then (\ref{3a}) cannot have a positive
fixed point so (\ref{ac0}) holds.

(b) In this case, both the even terms and the odd terms of a solution of
(\ref{3aa}) converge to 0 by Lemma \ref{ord1}(e).

(c) Let $x_{0}\in\lbrack u^{\ast},\bar{u}].$ Then by Lemma \ref{ord1}(d)
$\lim_{n\rightarrow\infty}x_{2n}=\bar{u}$ if $x_{0}\not =u^{\ast}$ and
$x_{2n}=u^{\ast}$ for all $n$ if $x_{0}=u^{\ast}.$ In either case, the orbit
$\{(x_{n},y_{n})\}$ does not converge to the origin. A similar argument
applies if $s^{\prime}y_{0}=x_{1}\in\lbrack u^{\ast},\bar{u}].$

(d) If $x_{0}=0$ and $x_{1}=u^{\ast}$ then $x_{2n}=0$ and $x_{2n+1}=u^{\ast}$
for all $n.$ Therefore, $\{0,u^{\ast},0,u^{\ast},\ldots\}$ is a solution of
(\ref{3aa}). Next, if $x_{2n}=\varepsilon$ and $x_{2n+1}=u^{\ast}%
-\varepsilon^{\prime}$ for small $\varepsilon,\varepsilon^{\prime}>0$ then by
Lemma \ref{ord1}(e) the corresponding solution $\{x_{n}\}$ converges to 0. It
follows that $\{0,u^{\ast},0,u^{\ast},\ldots\}$ is an unstable solution. By a
similar argument, $\{u^{\ast},0,u^{\ast},0\ldots\}$ is an unstable solution of
(\ref{3aa}).
\end{proof}

\medskip

In the next section we examine the Allee effect and the extinction region when
inter-stage interaction occurs. By Lemma \ref{2fp} if $\lambda>1$ and
(\ref{fxp}) holds then (\ref{4a}) has fixed points $x^{\ast}$ and $\bar{x}$
that satisfy (\ref{6a}). By analogy with Corollary \ref{nic} it might be
conjectured that $x_{0},x_{1}<x^{\ast}$ implies extinction for (\ref{4a}).
However, we show that this is not true!

\subsection{The Allee effect with inter-stage interaction}

When $b>0$ inter-stage interactions occur because adults will be present among
juveniles. However, we keep $s=0$ so as to study the specific role of the
coefficient $b$ in modifying the Allee effect as well as simplifying some
calculations. This leaves us with the second-order equation%
\begin{equation}
x_{n+1}=x_{n-1}^{\lambda}e^{a-bx_{n}-x_{n-1}}\label{4ab}%
\end{equation}

Both (\ref{4a}) and (\ref{4ab}) display the Allee-type bistable behavior when
$\lambda>1$ as well as a range of qualitatively different dynamics depending
on the parameter values. We study some nontrivial aspects of (\ref{4ab})
related to extinction and the Allee effect. 

First, by Lemma \ref{xst}, $x^{\ast}$ is a repelling node for (\ref{4ab}) when
$b>0.$ This similarity to the one-dimensional case where there is no
age-structuring is not typical of things to come though, because the solutions
of (\ref{4ab}) originating in a small neighborhood of $x^{\ast}$ do not
converge to 0. 

To gain a better understanding of the behaviors of solutions of (\ref{4ab}),
we begin with the following basic result which is true in more general
(non-autonomous) settings.

\begin{lemma}
\label{eoz}Assume that $\lambda>1$ and let $a_{n},b_{n}$ be sequences of real
numbers such that $a=\sup_{n\geq1}a_{n}<\infty$ and $b_{n}\geq0$ for all $n.$
Further, assume that (\ref{nac0}) holds and let $u^{\ast}$ be the Allee fixed
point of the mapping $f(u)=u^{\lambda}e^{a-u}$. If $x_{k}\in(0,u^{\ast})$ for
some $k\geq0$ then the terms $x_{k},x_{k+2},x_{k+4},\ldots$ of the
corresponding solution of the equation%
\begin{equation}
x_{n+1}=x_{n-1}^{\lambda}e^{a_{n}-b_{n}x_{n}-x_{n-1}} \label{4abn}%
\end{equation}
decrease monotonically to 0. Thus, if $k$ is even (or odd) then the
even-indexed (respectively, odd-indexed) terms of the solution eventually
decrease monotonically to zero.
\end{lemma}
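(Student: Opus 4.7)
The plan is to dominate the second-order recurrence pointwise by the one-dimensional Ricker map $f(u) = u^{\lambda} e^{a-u}$, thereby reducing the claim to Lemma \ref{ord1}(e). The key observation is that, since $b_n \geq 0$, $x_n \geq 0$, and $a_n \leq a$ for every $n$, equation (\ref{4abn}) yields
\begin{equation*}
x_{n+1} = x_{n-1}^{\lambda} e^{a_n - b_n x_n - x_{n-1}} \leq x_{n-1}^{\lambda} e^{a - x_{n-1}} = f(x_{n-1}).
\end{equation*}
Thus the intermediate term $x_n$ and the time-varying data $a_n, b_n$ only weaken the right-hand side, and the subsequence $\{x_{k+2m}\}_{m\geq 0}$ can be controlled as if it were driven by $f$ alone.

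Starting from the hypothesis $x_k \in (0, u^{\ast})$, I would prove by induction on $m$ that $x_{k+2m} \in (0, u^{\ast})$ and $x_{k+2(m+1)} < x_{k+2m}$. For the base case, the bound above gives $x_{k+2} \leq f(x_k)$, while Lemma \ref{ord1}(e) supplies $f(u) < u$ for $u \in (0, u^{\ast})$; hence $x_{k+2} < x_k < u^{\ast}$, and positivity of $x_{k+2}$ is immediate since $x_k > 0$ and the exponential is strictly positive. The inductive step is identical, using $x_{k+2m} \in (0, u^{\ast})$ to invoke Lemma \ref{ord1}(e) at the next stage.

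The sequence $\{x_{k+2m}\}$ is therefore strictly decreasing and bounded below by $0$, so it converges to some limit $L \in [0, u^{\ast})$. Passing to the limit in $x_{k+2(m+1)} \leq f(x_{k+2m})$ and using continuity of $f$ gives $L \leq f(L)$. Since $f(L) < L$ whenever $L \in (0, u^{\ast})$, once more by Lemma \ref{ord1}(e), this forces $L = 0$, which is the desired conclusion.

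The only subtle point worth flagging is the concern that the odd-indexed terms $x_{k+1}, x_{k+3}, \ldots$ might behave wildly and feed back destructively into the recurrence; but because $b_n x_n$ appears with a negative sign in the exponent, those terms only help the dominating inequality, so no quantitative control over the odd-indexed subsequence is required. In effect, both the second-order coupling and the full non-autonomous time dependence of (\ref{4abn}) are rendered invisible along the even-shifted indices, which is exactly what makes this a one-dimensional argument in disguise.
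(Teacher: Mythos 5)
Your proposal is correct and follows essentially the same route as the paper's own proof: dominate the recurrence by $x_{n+1}\leq f(x_{n-1})$ using $b_n\geq 0$ and $a_n\leq a$, apply Lemma \ref{ord1}(e) to get strict decrease within $(0,u^{\ast})$, and rule out a positive limit by passing to the limit in $x_{k+2(m+1)}\leq f(x_{k+2m})$. No substantive differences to report.
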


\begin{proof}
The inequality in (\ref{nac0}) implies that $u^{\ast}>0$ exists as a fixed
point of the mapping $f$. If $x_{k}\in(0,u^{\ast})$ for some $k\geq0$ then
$f(x_{k})<x_{k}$ by Lemma \ref{ord1} and
\begin{align*}
x_{k+2}  &  =e^{-b_{k+1}x_{k+1}}x_{k}^{\lambda}e^{a_{k+1}-x_{k}}\leq
x_{k}^{\lambda}e^{a-x_{k}}=f(x_{k})<x_{k}<u^{\ast}\\
x_{k+4}  &  =e^{-b_{k+3}x_{k+3}}x_{k+2}^{\lambda}e^{a_{k+3}-x_{k+2}}\leq
f(x_{k+2})<x_{k+2}<x_{k}<u^{\ast}%
\end{align*}
and so on. It follows that the terms $x_{k},x_{k+2},x_{k+4},\ldots$ from a
decreasing sequence with each term in $(0,u^{\ast})$. If $\inf_{j\geq
0}x_{k+2j}=\zeta>0$ then $\zeta<u^{\ast}$ so $f(\zeta)<\zeta.$ But
\[
\zeta=\lim_{j\rightarrow\infty}x_{k+2j}\leq\lim_{j\rightarrow\infty
}f(x_{k+2(j-1)})=f\left(  \lim_{j\rightarrow\infty}x_{k+2(j-1)}\right)
=f(\zeta)
\]
which not possible. Thus $\zeta=0$ and it follows that $\lim_{j\rightarrow
\infty}x_{k+2j}=0.$ The last statement of the theorem is obvious.
\end{proof}

\medskip

The next result may be compared with Corollary \ref{gen}(a) and Corollary
\ref{nic}(b),(c).

\begin{corollary}
\label{eozc}In the system (\ref{1})-(\ref{2}) assume that $\lambda>1$,
$s_{n}=0$ and $s_{n}^{\prime}=s^{\prime}>0$ for all $n$ and let $a=\sup
_{n\geq1}a_{n}<\infty.$ Also assume that (\ref{nac0}) holds and $u^{\ast}$ is
as in Lemma \ref{eoz}. If $\{(x_{n},y_{n})\}$ is an orbit of the system with
$(x_{k},y_{k})\in\lbrack0,u^{\ast})\times\lbrack0,u^{\ast}/s^{\prime})$ for
some $k$ then $\{(x_{n},y_{n})\}$ converges to the origin. In particular,
$[0,u^{\ast})\times\lbrack0,u^{\ast}/s^{\prime})\subset E_{0}$ and this
inclusion is proper if $b>0$.
\end{corollary}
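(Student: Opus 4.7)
The plan is to translate the corollary into statements about the folded scalar equation (\ref{4abn}) and apply Lemma \ref{eoz} separately to each parity of the time index. With $s_n = 0$ for all $n$, equation (\ref{1}) gives $x_{n+1} = s' y_n$, so by (\ref{yn}) the hypothesis $(x_k, y_k) \in [0, u^*) \times [0, u^*/s')$ translates to both $x_k$ and $x_{k+1} = s' y_k$ lying in $[0, u^*)$. I then invoke Lemma \ref{eoz}: if $x_k > 0$ the even-indexed subsequence $\{x_{k+2j}\}$ decreases monotonically to $0$, and if $x_k = 0$ the recursion (\ref{4abn}) forces this subsequence to vanish identically. The same argument applied to $x_{k+1}$ handles the odd-indexed terms. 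Hence $x_n \to 0$ and, using $y_n = x_{n+1}/s'$ from (\ref{yn}), $y_n \to 0$. Because Lemma \ref{ord1}(e) gives $x_{n+2} \leq f(x_n) < x_n < u^*$ whenever $x_n \in (0, u^*)$, the rectangle is forward invariant, so $[0, u^*) \times [0, u^*/s') \subset E_0$.

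For properness when $b > 0$, I would exhibit an orbit starting outside the rectangle that enters it after two iterations and hence converges to the origin by the first part. Take $x_0 = u^*$, so that $(x_0, y_0)$ lies outside $[0, u^*) \times [0, u^*/s')$ for every $y_0 \geq 0$. Using $f(u^*) = u^*$,
\[
x_2 = (u^*)^\lambda e^{a_1 - b x_1 - u^*} \leq u^* e^{-b x_1} < u^*
\]
for any $x_1 > 0$, and this strict inequality is precisely where $b > 0$ is essential. Next choose $y_0$ (hence $x_1 = s' y_0$) large enough that $f(x_1) < u^*$; this is possible because $f(u) = u^\lambda e^{a-u} \to 0$ as $u \to \infty$. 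Then
\[
x_3 \leq x_1^\lambda e^{a - x_1} = f(x_1) < u^*,
\]
so $(x_2, y_2) \in [0, u^*) \times [0, u^*/s')$ and the first part yields convergence to the origin. Thus the point $(u^*, x_1/s')$ belongs to $E_0$ but not to the rectangle.

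The main conceptual point is isolating the mechanism by which $b > 0$ enlarges the extinction region. The strict inequality $x_2 < u^*$ rests entirely on the factor $e^{-b x_1}$; when $b = 0$, the identity $f(u^*) = u^*$ forces $x_2 = u^*$ and an orbit starting on the boundary $\{u^*\} \times [0, u^*/s')$ cannot escape into the rectangle, matching Corollary \ref{nic}(c) where the inclusion is an equality. Controlling $x_3$ simultaneously is automatic since enlarging $x_1$ shrinks both $e^{-b x_1}$ and $f(x_1)$, so a single sufficiently large choice of $x_1$ handles both requirements.
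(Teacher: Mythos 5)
Your proof of the first assertion is correct and follows the paper's route: fold the system, observe that $(x_k,y_k)\in[0,u^{\ast})\times[0,u^{\ast}/s^{\prime})$ means $x_k,x_{k+1}\in[0,u^{\ast})$, and apply Lemma \ref{eoz} to each parity of the index (your separate treatment of the case $x_k=0$ is a reasonable extra precaution that the paper glosses over).

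The properness argument, however, has a genuine gap. The corollary asserts that the inclusion of the rectangle in $E_0$ is proper, and $E_0$ is by definition the \emph{connected component} of the extinction region $E$ having the origin on its boundary. Your witness point is $(u^{\ast}, x_1/s^{\prime})$ with $x_1$ chosen so large that $f(x_1)<u^{\ast}$; by Lemma \ref{ord1}(f) this forces $x_1>u_{\ast}>\lambda$, so the point sits far above the rectangle. What you actually prove is that this point lies in $E$; you give no argument that it lies in the same component of $E$ as the origin, and that connectivity is doubtful here. The obvious path down the vertical line $x_0=u^{\ast}$ passes through points with $x_1\in(u^{\ast},\bar u)$, where (by the mechanism of Theorem \ref{nfpa}(b)) the odd-indexed subsequence can stay bounded away from zero, so those intermediate points need not belong to $E$; indeed the paper's Figure \ref{E} and the Proposition concerning $R_{0,1}=[0,u^{\ast})\times[u_{\ast},\infty)$ indicate that the region your point borders is one of the unbounded components of $E$ that are \emph{separated} from $E_0$. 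The paper avoids this entirely by taking the witness to be the corner $(u^{\ast},u^{\ast}/s^{\prime})$: that point is a limit point of the rectangle, which is a connected subset of $E_0$, so adjoining it preserves connectedness and maximality of $E_0$ forces it into $E_0$. Your own computation already repairs the argument if you simply set $x_1=u^{\ast}$: then $x_2\le u^{\ast}e^{-bu^{\ast}}<u^{\ast}$ and $x_3=e^{-bx_2}f(u^{\ast})<u^{\ast}$, and Lemma \ref{eoz} finishes as before. Your observation that the factor $e^{-bx_1}$ is precisely where $b>0$ enters, and that $b=0$ pins the boundary orbit at $u^{\ast}$ as in Corollary \ref{nic}(c), is correct and is the same mechanism the paper exploits.
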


\begin{proof}
The first assertion of the theorem follows readily from Lemma \ref{eoz} since
$x_{k+1}=s^{\prime}y_{k}$ by (\ref{yn}). Thus $E_{0}$ contains the rectangle
$[0,u^{\ast})\times\lbrack0,u^{\ast}/s^{\prime}).$ To see why this inclusion
is proper, we show that $E_{0}$ contains points not in the rectangle. Let
$(x_{0},y_{0})$ be the boundary point $(u^{\ast},u^{\ast}/s^{\prime})$ of the
rectangle. Then $x_{0}=u^{\ast}$ and also $x_{1}=s^{\prime}y_{0}=u^{\ast}.$
Now, if $b>0$ then%
\begin{align*}
x_{2}  &  =e^{-bx_{1}}x_{0}^{\lambda}e^{a_{1}-x_{0}}<x_{0}^{\lambda}%
e^{a_{1}-x_{0}}\leq(u^{\ast})^{\lambda}e^{a-u^{\ast}}=u^{\ast}\\
x_{3}  &  =e^{-bx_{2}}x_{1}^{\lambda}e^{a_{2}-x_{1}}<x_{1}^{\lambda}%
e^{a_{2}-x_{1}}\leq(u^{\ast})^{\lambda}e^{a-u^{\ast}}=u^{\ast}%
\end{align*}

Thus $x_{2},x_{3}\in(0,u^{\ast})$ and Lemma \ref{eoz} implies that
$\lim_{n\rightarrow\infty}(x_{n},y_{n})=(0,0);$ i.e. $(u^{\ast},u^{\ast
}/s^{\prime})\in E_{0}$ and the proof is complete.
\end{proof}

\medskip

\begin{remark}
A comparison of Corollaries \ref{nic}(b),(c) and \ref{eozc} indicates that the
base component $E_{0}$ of the extinction region is enlarged when $b>0$ as
compared with $b=0.$ In fact, the boundary point $(u^{\ast},u^{\ast}%
/s^{\prime})$ in the proof of Corollary \ref{eozc} is a fixed point of the
system when $b=0.$

\medskip
\end{remark}

The next result ensures that certain parameter ranges that appear in the
theorem that follows it are not empty.

\begin{lemma}
\label{lam}Assume that $\lambda>1$. Then%
\begin{equation}
(\lambda-1)[1-\ln(\lambda-1)]<\lambda-(\lambda-1)\ln\lambda\label{lam1}%
\end{equation}

Further,%
\begin{equation}
(\lambda-1)[1-\ln(\lambda-1)+\ln(b+1)]\leq\lambda-(\lambda-1)\ln
\lambda\label{lam2}%
\end{equation}

if and only if%
\begin{equation}
b\leq\frac{\lambda-1}{\lambda}e^{1/(\lambda-1)}-1. \label{lam4}%
\end{equation}

\end{lemma}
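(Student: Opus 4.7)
My plan is to handle both inequalities by straight algebraic manipulation, observing that \eqref{lam1} is essentially the strict version of \eqref{lam2} at $b=0$. The only real input from analysis will be the elementary inequality $\ln(1+t) < t$ for $t>0$.

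For \eqref{lam1}, I would move all the logarithmic terms to one side. Subtracting the left-hand side from the right gives
\[
\lambda-(\lambda-1)\ln\lambda - (\lambda-1)\bigl[1-\ln(\lambda-1)\bigr] = 1 - (\lambda-1)\ln\frac{\lambda}{\lambda-1}.
\]
Setting $t=1/(\lambda-1)>0$, one has $\lambda/(\lambda-1) = 1+t$, so the display equals $1 - (1/t)\ln(1+t)$, which is strictly positive precisely because $\ln(1+t) < t$ for $t>0$. This proves \eqref{lam1}.

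For \eqref{lam2}, I would perform the same kind of rearrangement, this time keeping $b$. Moving everything to one side, the inequality is equivalent to
\[
(\lambda-1)\ln(b+1) + (\lambda-1)\ln\frac{\lambda}{\lambda-1} \leq 1,
\]
that is, $(\lambda-1)\ln\frac{\lambda(b+1)}{\lambda-1} \leq 1$. Dividing by $\lambda-1$ and exponentiating (both operations preserve the inequality since $\lambda-1>0$ and $\exp$ is monotone), this becomes
\[
\frac{\lambda(b+1)}{\lambda-1} \leq e^{1/(\lambda-1)},
\]
which rearranges to \eqref{lam4}. Every step is reversible, so the equivalence holds.

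There is no real obstacle here; the only thing to be careful about is keeping track of signs, and noting that the bound in \eqref{lam4} is strictly positive for $\lambda>1$ (by $e^t > 1+t$ with $t=1/(\lambda-1)$), which in turn explains consistency with \eqref{lam1} and shows that the parameter range permitted by \eqref{lam4} is nonempty.
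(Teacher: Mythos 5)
Your proposal is correct and follows essentially the same route as the paper: both reduce \eqref{lam1} to the elementary inequality $\ln(1+t)<t$ via the substitution $t=1/(\lambda-1)$, and both obtain \eqref{lam4} from \eqref{lam2} by the same reversible rearrangement and exponentiation. The closing remark that the bound in \eqref{lam4} is positive is a small bonus not in the paper's proof, but otherwise the arguments coincide.
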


\begin{proof}
The inequality in (\ref{lam1}) is equivalent to%
\begin{equation}
\ln\frac{\lambda}{\lambda-1}<\frac{1}{\lambda-1} \label{lam3}%
\end{equation}

If we define $\gamma=1/(\lambda-1)$ then $\lambda=1+1/\gamma$ so (\ref{lam3})
is equivalent to $\ln(\gamma+1)<\gamma$, or $\gamma+1<e^{\gamma}$ which is
clearly true for $\gamma>0$. Further, the inequality in (\ref{lam2}) is
equivalent to
\[
b+1\leq\frac{e^{\gamma}}{\gamma+1}%
\]
which is equivalent to (\ref{lam4}).
\end{proof}

\medskip

\begin{theorem}
\label{nfpa}Assume that $\lambda>1.$

(a) Every non-negative solution of (\ref{4ab}) converges to 0 if and only if
(\ref{ac0}) holds.

(b) If
\begin{equation}
(\lambda-1)[1-\ln(\lambda-1)]<a<(\lambda-1)[1-\ln(\lambda-1)+\ln(b+1)]
\label{fxp0}%
\end{equation}
then (\ref{4ab}) has no positive fixed points but it has positive solutions
that do not converge to 0.

(c) Assume that (\ref{lam4}) holds and further,%
\begin{equation}
(\lambda-1)[1-\ln(\lambda-1)+\ln(b+1)]\leq a\leq\lambda-(\lambda-1)\ln\lambda.
\label{fxp1}%
\end{equation}

If $x^{\ast}$ is the smaller of the positive fixed points of (\ref{4ab}) then
there are initial values $x_{1},x_{0}\in(0,x^{\ast})$ for which the
corresponding positive solutions of (\ref{4ab}) do not converge to 0.

(d) If (\ref{lam4}) and (\ref{fxp1}) hold, then the solutions of (\ref{4ab})
from initial values $(x_{0},x_{1})\in([x^{\ast},\lambda]\times\lbrack
0,x^{\ast}])\cup([0,x^{\ast}]\times\lbrack x^{\ast},\lambda])$ do not converge
to the origin.
\end{theorem}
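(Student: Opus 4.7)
The plan is to build everything on top of the scalar Ricker map $f(u)=u^{\lambda}e^{a-u}$ from Lemma \ref{ord1}, exploiting the fact that whenever $x_{n}=0$ the recurrence (\ref{4ab}) decouples: $x_{n+2}=0$ and $x_{n+1}=f(x_{n-1})$. This gives period-two orbits on the axes for free whenever $f$ has positive fixed points, and it provides a template for constructing non-convergent orbits in the interior by ``nearly axis-like'' initial data.

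For (a), if (\ref{ac0}) holds then by Lemma \ref{ord1}(a) the quantity $h(u)=u^{\lambda-1}e^{a-u}$ never reaches $1$, so $f(u)<u$ for all $u>0$. Since $x_{n+1}=e^{-bx_{n}}f(x_{n-1})\leq f(x_{n-1})<x_{n-1}$, the even and odd subsequences are each strictly decreasing to limits $L_{e},L_{o}\geq0$; passing to the limit in $x_{n+1}\leq f(x_{n-1})$ and using $f(L)<L$ for $L>0$ forces $L_{e}=L_{o}=0$. Conversely, if (\ref{ac0}) fails then $f$ has a fixed point $\bar{u}>0$ by Lemma \ref{ord1}(b), and $(x_{0},x_{1})=(0,\bar{u})$ yields the periodic orbit $\{0,\bar{u},0,\bar{u},\ldots\}$. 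Part (b) is immediate from the same construction: under (\ref{fxp0}) the left inequality is exactly (\ref{fup}), giving existence of $\bar{u}$ and hence of the period-two orbit, while the right inequality matches (\ref{nfxp}) at $s=0$, so Lemma \ref{2fp} rules out positive fixed points of (\ref{4ab}).

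For (d), I verify that the rectangles $R_{1}=[x^{\ast},\lambda]\times[0,x^{\ast}]$ and $R_{2}=[0,x^{\ast}]\times[x^{\ast},\lambda]$ are interchanged by the one-step map $(x_{n-1},x_{n})\mapsto(x_{n},x_{n+1})$. The key algebraic identity is $f(x^{\ast})=x^{\ast}e^{bx^{\ast}}$, obtained by combining the scalar equation's fixed-point relation $(x^{\ast})^{\lambda-1}e^{a-(b+1)x^{\ast}}=1$ with the definition of $f$. Together with monotonicity of $f$ on $[0,\lambda]$, this gives, for $(x_{n-1},x_{n})\in R_{1}$,
\[
x^{\ast}=e^{-bx^{\ast}}f(x^{\ast})\leq e^{-bx_{n}}f(x_{n-1})=x_{n+1}\leq f(\lambda)\leq\lambda,
\]
where the last bound uses Lemma \ref{ord1}(c), applicable because (\ref{fxp1}) includes (\ref{ubar}). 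Thus $(x_{n},x_{n+1})\in R_{2}$; symmetrically $R_{2}\to R_{1}$. Since at every time step at least one coordinate is $\geq x^{\ast}$, the sequence cannot converge to $0$.

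The only nontrivial case is (c), where I must exhibit non-convergent orbits with \emph{both} initial values strictly below $x^{\ast}$. The idea is to use (d) as a downstream trap: because $b>0$ Lemma \ref{bux} gives $x^{\ast}>u^{\ast}$, so $f(u^{\ast})=u^{\ast}<x^{\ast}<x^{\ast}e^{bx^{\ast}}=f(x^{\ast})$, and by continuity of $f$ there exists $x_{1}\in(u^{\ast},x^{\ast})$ with $f(x_{1})>x^{\ast}$. Choosing $x_{0}=\varepsilon\in(0,x^{\ast})$ small enough that $x_{2}=e^{-bx_{1}}\varepsilon^{\lambda}e^{a-\varepsilon}$ satisfies $e^{-bx_{2}}f(x_{1})>x^{\ast}$ places $(x_{2},x_{3})$ in $R_{2}$, and applying (d) from time $2$ finishes the proof. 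The main obstacle is precisely this construction: the solution must start inside the little square $(0,x^{\ast})^{2}$ but still escape into the invariant off-diagonal region, and this works only because the strict inequality $f(x^{\ast})>x^{\ast}$ (which fails at $b=0$) provides the slack that lets $x_{3}$ overshoot $x^{\ast}$ even when $x_{2}$ is arbitrarily small.
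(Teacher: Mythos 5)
Your arguments for (a), (c) and (d) are sound. In (a) you replace the paper's appeal to Theorem \ref{bc0}(c) by a direct monotonicity argument ($f(u)<u$ for all $u>0$ under (\ref{ac0}), hence each of the two subsequences decreases to a limit $L$ satisfying $L\le f(L)$, forcing $L=0$), which works. Your (d) is essentially the paper's invariance argument for the off-diagonal rectangles, and your (c) is a genuinely different and arguably cleaner route than the paper's: the paper re-runs the delicate iterative construction of its proof of (b) with one initial value in $(u^{\ast},\min\{x^{\ast},\bar u\})$ and the other very small, whereas you push the orbit into the invariant set of (d) in two steps, exploiting the slack $f(x^{\ast})=x^{\ast}e^{bx^{\ast}}>x^{\ast}$ that is available only when $b>0$.

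Part (b), however, has a genuine gap. The only non-convergent solution you produce there is the period-two orbit $\{0,\bar u,0,\bar u,\ldots\}$, which is a non-negative solution with half of its terms equal to zero; it is not a \emph{positive} solution, which is what the statement asserts (note the deliberate contrast between ``non-negative solution'' in (a) and ``positive solutions'' in (b), and the paper's proof, which carefully arranges $x_{0},x_{1}>0$). That boundary orbit exists whenever $f$ has a positive fixed point, for every $b\ge 0$, and is already what the converse direction of (a) supplies; it says nothing about survival of interior orbits, which is the entire point of (b) -- survival in the open quadrant in the absence of positive fixed points. Your devices for producing interior non-convergent orbits elsewhere do not transfer: the trap of (d) requires the fixed point $x^{\ast}$, which does not exist under (\ref{fxp0}). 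What is needed (and what the paper does) is a perturbation argument: take $x_{0}\in(u^{\ast},\bar u)$ and $x_{1}>0$ so small that the odd-indexed subsequence decays geometrically, whence $\sum_{n}bx_{2n+1}$ is finite and controllably small; then the even-indexed subsequence, which evolves by $x_{2n+2}=e^{-bx_{2n+1}}f(x_{2n})$, is a controlled perturbation of the pure $f$-dynamics and can be kept inside $(u^{\ast},\bar u)$ for all $n$ by choosing $x_{1}$ small enough at the outset. You gesture at ``nearly axis-like initial data'' in your preamble but never carry out this estimate, and that estimate is precisely where the work of part (b) lies.
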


\begin{proof}
(a) If (\ref{ac0}) holds then by Theorem \ref{bc0} every positive solution of
(\ref{4ab}) converges to $0.$ Conversely, assume that (\ref{nac0}) holds. If
we choose $x_{0}=0$ in (\ref{4ab}) then $x_{2n}=0$ for $n\geq0$ so
\begin{equation}
x_{2n+1}=x_{2n-1}^{\lambda}e^{a-x_{2n-1}} \label{o1o}%
\end{equation}

The inequality in (\ref{nac0}) implies that $u^{\ast}$ is a fixed point of
(\ref{o1o}). If $x_{1}=u^{\ast}$ then the constant solution $x_{2n+1}=u^{\ast
}$ satisfies (\ref{o1o}) and thus the sequence $\{u^{\ast},0,u^{\ast}%
,0,\ldots\}$ with period 2 is a non-negative solution of (\ref{4ab}) that does
not converge to 0.

(b) If (\ref{fxp0})\ holds then Lemma \ref{2fp} implies that (\ref{4ab}) has
no positive fixed points. Further, by Lemma \ref{lam} we may choose a value of
$a$ such that
\[
(\lambda-1)[1-\ln(\lambda-1)]<a<\min\{(\lambda-1)[1-\ln(\lambda-1)+\ln
(b+1)],\lambda-(\lambda-1)\ln\lambda\}.
\]

For the above range of values of $a$, Lemma \ref{ord1} implies that the map
$f$ defined by (\ref{fmap}) has a pair of fixed points $u^{\ast}$ and $\bar
{u}$, that the interval $(u^{\ast},\bar{u})$ is invariant under $f$ with
$\bar{u}<f(\lambda)\leq\lambda$ and further, $f$ is strictly increasing on
this interval with $f(u)>u$ for $u\in(u^{\ast},\bar{u})$. Thus, if $u^{\ast
}<x_{0}<\bar{u}$ then $u^{\ast}<x_{0}<f(x_{0})<\bar{u}$. Let $\varepsilon
_{0}>0$ be fixed and choose $x_{1}\leq e^{-(a+\varepsilon_{0})/(\lambda-1)}$.
Since%
\begin{equation}
x_{2}=x_{0}^{\lambda}e^{a-bx_{1}-x_{0}}=e^{-bx_{1}}f(x_{0}) \label{ae1}%
\end{equation}
it follows that $x_{2}<f(x_{0})<\bar{u}.$ Now, let $\delta=e^{-\varepsilon
_{0}}\in(0,1)$ and choose $x_{1}$ small enough that%
\begin{equation}
e^{-bx_{1}/(1-\delta)}f(x_{0})>u^{\ast}. \label{ae2}%
\end{equation}

Then $e^{-bx_{1}}f(x_{0})>u^{\ast}$ in which case $u^{\ast}<x_{2}<\bar{u}.$
Proceeding in this fashion to the subsequent steps, assume by way of induction
that $u^{\ast}<x_{2n+1}<\bar{u}$ for some $n\geq1.$ Then $u^{\ast}%
<f(x_{2n+1})$ and as in the proof of Theorem \ref{bc0}(b), $x_{2n}\leq\delta
x_{2n-2}<\delta^{n}x_{0}$. Next,%
\begin{equation}
x_{2n+3}=x_{2n+1}^{\lambda}e^{a-bx_{2n+2}-x_{2n+1}}=e^{-bx_{2n+2}}f(x_{2n+1})
\label{ae3}%
\end{equation}
so $x_{2n+3}<f(x_{2n+1})<f(\bar{u})=\bar{u}$ since $f$ \ is increasing.
Further, by (\ref{ae3}) and the fact that $f(u)>u$ for $u\in(u^{\ast},\bar
{u}),$
\[
x_{2n+3}=e^{-bx_{2n+2}}f(e^{-bx_{2n}}f(x_{2n-1}))>e^{-bx_{2n+2}-bx_{2n}%
}f(x_{2n-1})>\cdots>e^{-bx_{2n+2}-bx_{2n}-\cdots-bx_{2}}f(x_{1})
\]

As in Theorem \ref{bc0}(b)
\[
bx_{2n+2}+bx_{2n}+\cdots+bx_{2}\leq bx_{0}(1+\delta+\cdots+\delta^{n+1}%
)<\frac{bx_{0}}{1-\delta}%
\]
so by (\ref{ae2}) $x_{2n+3}>u^{\ast}.$ By induction, this inequaltiy holds for
all $n\geq1$ so that this positive solution $\{x_{n}\}$ of (\ref{4a}) does not
converge to 0.

(c) If (\ref{lam4}) holds then (\ref{fxp1}) defines a nonempty interval and by
Lemma \ref{2fp}, (\ref{4ab}) has at least one fixed point $x^{\ast}$. Since
(\ref{fxp1}) also implies $a>(\lambda-1)[1-\ln(\lambda-1)]$, by Lemma
\ref{ord1} the map $f$ defined by (\ref{fmap}) also has a pair of fixed points
$u^{\ast}$ and $\bar{u}$ and Lemma \ref{bux} implies that $u^{\ast}<x^{\ast}$.
Let $u^{\ast}<x_{-1}<\min\{x^{\ast},\bar{u}\}$ and repeat the proof of (b) to
obtain a positive solution $\{x_{n}\}$ that does not converge to 0.

(d) By (\ref{lam4}) and (\ref{fxp1}), (\ref{4ab}) has a fixed point $x^{*}%
\leq\frac{\lambda-1}{b+1}<\lambda$, and $f(\lambda)\leq\lambda$, where
$\lambda$ is the unique maximum of the map $f$ defined in (\ref{fmap}).
Moreover, for $x, y\geq0$
\[
x^{\lambda}e^{a-by-x}=f(x)e^{-by}\leq f(x)\leq f(\lambda)
\]
so the solutions of (\ref{4ab}) are bounded from above by $f(\lambda)$. Now,
for $n\geq0$, if $x^{*}\leq x_{2n-1}\leq\lambda, x_{2n}\leq x^{*}$
\[
x_{2n+1}=x_{2n-1}^{\lambda}e^{a-bx_{2n}-x_{2n-1}}=f(x_{2n-1})e^{-bx_{2n}}\geq
f(x^{*})e^{-bx^{*}}=x^{*}
\]
and
\[
x_{2n+2}=x_{2n}^{\lambda}e^{a-bx_{2n+1}-x_{2n}}=f(x_{2n})e^{-bx_{2n+1}}\leq
f(x^{*})e^{-bx^{*}}=x^{*}
\]
Similarly, for $n\geq0$, if $x_{2n-1}\leq x^{*}, x^{*}\leq x_{2n}\leq\lambda$%

\[
x_{2n+1}=x_{2n-1}^{\lambda}e^{a-bx_{2n}-x_{2n-1}}=f(x_{2n-1})e^{-bx_{2n}}\leq
f(x^{*})e^{-bx^{*}}=x^{*}
\]
and
\[
x_{2n+2}=x_{2n}^{\lambda}e^{a-bx_{2n+1}-x_{2n}}=f(x_{2n})e^{-bx_{2n+1}}\geq
f(x^{*})e^{-bx^{*}}=x^{*}
\]
and the proof is complete.
\end{proof}

\medskip

The following result about the system (\ref{1})-(\ref{2}) is an immediate
consequence of Theorem \ref{nfpa}. It may be compared with Corollaries
\ref{nic} and \ref{eozc}.

\begin{corollary}
\label{ext}In the system (\ref{1})-(\ref{2}) assume that $\lambda>1$, $b>0$,
$s_{n}=0$ for all $n$ and further, $s_{n}^{\prime}=s^{\prime}$ and $r_{n}=r$
are constants.

(a) The origin is a global attractor of all orbits $\{(x_{n},y_{n})\}$ of
(\ref{1})-(\ref{2}) if and only if (\ref{ac0}) holds.

(b) If $a=r+\ln s^{\prime}$ satisfies (\ref{fxp0}) then the origin is the only
fixed point of (\ref{1})-(\ref{2}) but the system has orbits in the positive
quadrant of the plane that do not converge to the origin, i.e. $E_{0}%
\not =[0,\infty)\times\lbrack0,\infty)$.

(c) If $b$ and $a=r+\ln s^{\prime}$ satisfy the inequalities in (\ref{lam4})
and (\ref{fxp1}) respectively, then there are initial points $(x_{0},y_{0}%
)\in\lbrack0,x^{\ast})\times\lbrack0,x^{\ast}/s^{\prime})$ for which the
corresponding orbit does not converge to the origin, i.e. $[0,x^{\ast}%
)\times\lbrack0,x^{\ast}/s^{\prime})\not \subset E_{0}.$

(d) If $b$ and $a=r+\ln s^{\prime}$ satisfy the inequalities in (\ref{lam4})
and (\ref{fxp1}) respectively, then there are initial points $(x_{0},y_{0}%
)\in[x^{*}, \lambda]\times[0, x^{*}/s^{\prime}]\cup[0, x^{*}]\times
[x^{*}/s^{\prime}, \lambda/s^{\prime}]$ for which the corresponding orbit does
not converge to the origin.
\end{corollary}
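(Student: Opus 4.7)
The plan is to translate each of the four parts of Theorem \ref{nfpa}, which concerns the folded scalar equation (\ref{4ab}), back to statements about the planar system (\ref{1})-(\ref{2}). The key observation is that when $s_n=0$ and $s_n^{\prime}=s^{\prime}$, equation (\ref{1}) reduces to $x_{n+1}=s^{\prime}y_n$, so an orbit $\{(x_n,y_n)\}$ of the system is completely determined by the sequence $\{x_n\}$ via $y_n=x_{n+1}/s^{\prime}$. In particular, $(x_n,y_n)\to(0,0)$ if and only if $x_n\to 0$, and the initial values of (\ref{4ab}) corresponding to $(x_0,y_0)$ are $(x_0,x_1)=(x_0,s^{\prime}y_0)$. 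Thus a rectangle $[0,\alpha)\times[0,\alpha/s^{\prime})$ in the $(x,y)$-plane is precisely the preimage of the square $[0,\alpha)\times[0,\alpha)$ in the $(x_0,x_1)$-plane under this correspondence.

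For part (a), apply Theorem \ref{nfpa}(a): every nonnegative solution of (\ref{4ab}) converges to $0$ iff (\ref{ac0}) holds, and via $y_n=x_{n+1}/s^{\prime}$ this is equivalent to every orbit of (\ref{1})-(\ref{2}) converging to the origin. For part (b), I will first check that positive fixed points of (\ref{4ab}) correspond bijectively to positive fixed points of (\ref{1})-(\ref{2}): a fixed point $(x^{*},y^{*})$ of the system satisfies $x^{*}=s^{\prime}y^{*}$ and $y^{*}=(x^{*})^{\lambda}e^{r-(b+1)x^{*}}$, so $x^{*}$ solves (\ref{5a}) with $s=0$, $a=r+\ln s^{\prime}$; conversely, each such $x^{*}$ gives a fixed point $(x^{*},x^{*}/s^{\prime})$. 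Thus under (\ref{fxp0}), Theorem \ref{nfpa}(b) gives no positive fixed points of the system, and the non-converging solutions of (\ref{4ab}) it produces lift to orbits of (\ref{1})-(\ref{2}) that do not converge to the origin.

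For parts (c) and (d), I simply translate initial-value rectangles under $(x_0,y_0)\mapsto(x_0,s^{\prime}y_0)$. Theorem \ref{nfpa}(c) gives initial values $(x_0,x_1)\in(0,x^{*})\times(0,x^{*})$ with non-converging solutions, which lifts to $(x_0,y_0)\in[0,x^{*})\times[0,x^{*}/s^{\prime})$, proving that this rectangle is not contained in $E_0$. For part (d), Theorem \ref{nfpa}(d) provides initial values in $\bigl([x^{*},\lambda]\times[0,x^{*}]\bigr)\cup\bigl([0,x^{*}]\times[x^{*},\lambda]\bigr)$ yielding non-converging solutions; dividing the second coordinate by $s^{\prime}$ gives exactly the union of rectangles stated in Corollary \ref{ext}(d).

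The proof is essentially bookkeeping once the folding correspondence is written down, so I do not anticipate any real obstacle. The one point that warrants explicit verification rather than a wave of the hand is the fixed-point bijection used in (b), since the statement there asserts uniqueness of the origin as a fixed point of the full planar system, not just of the scalar equation; beyond that, the translation of state-space rectangles is routine.
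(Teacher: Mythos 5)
Your proposal is correct and follows exactly the route the paper intends: the paper offers no written proof, stating only that the corollary is an immediate consequence of Theorem \ref{nfpa}, and your unfolding via $y_n = x_{n+1}/s^{\prime}$ (together with the fixed-point correspondence needed for part (b)) is precisely the bookkeeping that justifies that claim.
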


By Corollary \ref{eozc} $[0,u^{\ast})\times\lbrack0,u^{\ast}/s^{\prime
})\subset E_{0}.$ On the other hand, Corollary \ref{ext}(c) indicates that the
Allee region $E_{0}$ does not contain the larger rectangle $[0,x^{\ast}%
)\times\lbrack0,x^{\ast}/s^{\prime})$ (when $x^{\ast}$ exists). Actually, the
following is true.

\begin{corollary}
Under the hypotheses of Corollary \ref{ext}(c) $E_{0}\subset\lbrack0,x^{\ast
})\times\lbrack0,x^{\ast}/s^{\prime})$ where the inclusion is proper.
\end{corollary}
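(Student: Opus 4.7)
The plan is to exploit Corollary~\ref{ext}(d) to erect a topological barrier separating the rectangle $B := [0, x^{\ast}) \times [0, x^{\ast}/s^{\prime})$ from the rest of the first quadrant, then use connectedness of $E_0$ to deduce the containment; properness will be immediate from Corollary~\ref{ext}(c).

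Concretely, I would first define
\[
L := \bigl(\{x^{\ast}\} \times [0, x^{\ast}/s^{\prime}]\bigr) \cup \bigl([0, x^{\ast}] \times \{x^{\ast}/s^{\prime}\}\bigr),
\]
the L-shaped portion of the topological boundary of $B$ lying in the open first quadrant. A direct inspection shows that $L$ is contained in $[x^{\ast},\lambda] \times [0, x^{\ast}/s^{\prime}] \cup [0,x^{\ast}] \times [x^{\ast}/s^{\prime}, \lambda/s^{\prime}]$, so Corollary~\ref{ext}(d) immediately yields $L \cap E = \varnothing$. Next I would set $O := \{(x,y) \in [0,\infty)^{2} : x > x^{\ast} \text{ or } y > x^{\ast}/s^{\prime}\}$ and verify that $B$ and $O$ are disjoint open subsets of $[0,\infty)^{2}$ with $B \sqcup L \sqcup O = [0,\infty)^{2}$. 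Since $E \subset [0,\infty)^{2}\setminus L$, the sets $E \cap B$ and $E \cap O$ form a separation of $E$ into two relatively open pieces.

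The heart of the argument is then purely topological: because $E_0$ is connected, it must lie entirely in $E \cap B$ or entirely in $E \cap O$. Every point of $O$ satisfies $x \geq x^{\ast}$ or $y \geq x^{\ast}/s^{\prime}$, so $(0,0) \notin \overline{O}$; meanwhile $(0,0) \in B \subset \overline{B}$. Since the origin belongs to $\overline{E_0}$ by the definition of the base component, the alternative $E_0 \subset O$ is ruled out, forcing $E_0 \subset B$. Properness is then free: Corollary~\ref{ext}(c) already produces points of $B$ whose orbits do not converge to $(0,0)$, and any such point lies in $B \setminus E \subset B \setminus E_0$.

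The main obstacle I anticipate is making the planar separation fully rigorous in $[0,\infty)^{2}$ rather than in $\mathbb{R}^{2}$: one has to invoke explicitly the fact that the coordinate axes form part of the boundary of the state space, so a continuous path from $B$ to $O$ cannot ``go around'' $L$ while remaining in the first quadrant. This observation is essentially a remark, but without it the connectedness chain does not close; once it is in place, no further computation is required.
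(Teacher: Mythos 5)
Your argument is correct and is essentially the paper's own proof, which likewise combines Corollary \ref{ext}(d) (read, as intended via Theorem \ref{nfpa}(d), as excluding the entire L-shaped boundary of the rectangle from $E$) with the connectedness of $E_{0}$ and the fact that the origin lies in $\overline{E_{0}}$; you have merely written out the decomposition $B\sqcup L\sqcup O$ explicitly, and properness is obtained from Corollary \ref{ext}(c) in both versions. The final obstacle you anticipate is not a real one: since $B$ and $O$ are relatively open in $[0,\infty)^{2}$, are disjoint, and cover $[0,\infty)^{2}\setminus L$ while $E\cap L=\varnothing$, the sets $E\cap B$ and $E\cap O$ already constitute a separation of $E$, so no Jordan-curve or path-around-$L$ argument is required.
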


\begin{proof}
By Corollary \ref{ext}(d), the boundary of the rectangle $[0,x^{\ast}%
)\times\lbrack0,x^{\ast}/s^{\prime})$ is not contained in $E_{0}$ and since
$E_{0}$ is a connected set, it cannot contain points outside $[0,x^{\ast
})\times\lbrack0,x^{\ast}/s^{\prime})$.
\end{proof}

\medskip

\begin{figure}[tbp] 
  \centering
  \includegraphics[width=3.83in,height=3.09in,keepaspectratio]{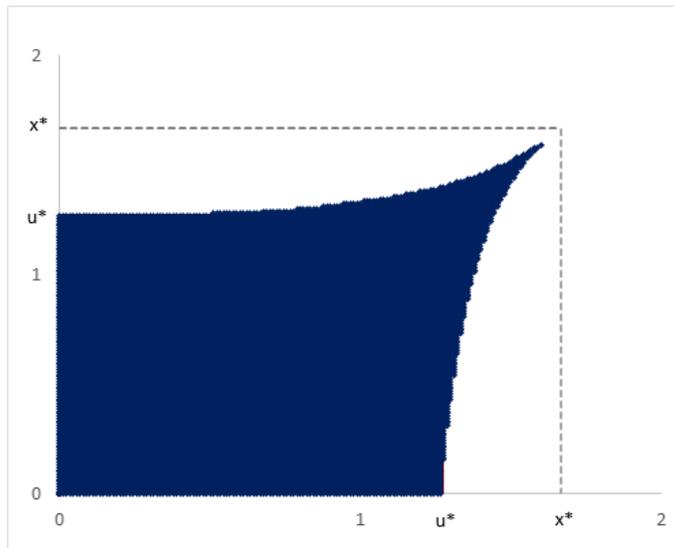}
  \caption{$E_{0}$ with $\lambda = 3$, $a=0.7936$, $b=0.0891$, $s'=1$}
  \label{E0}
\end{figure}

The numerically generated plot of $E_{0}$ shown in Figure \ref{E0} clearly
illustrates the preceding result. This plot was generated by examining the
behavior of solutions from initial points $(x_{0},y_{0})$ on a 200 by 200
partition grid. As expected from the adverse effect of inter-stage interaction
with $b>0$, the extinction region is larger then when $b=0.$ However, not all
initial pairs $(x_{0},y_{0})$ in the set $[0,x^{\ast})\times\lbrack0,x^{\ast
}/s^{\prime})$ lead to extinction, a somewhat non-intuitive outcome.

\section{ Summary and open problems}

In this paper, we established general results about the convergence to origin
of orbits of the system (\ref{1})-(\ref{2}) in the positive quadrant of the
plane when $\lambda>1$. These extinction results that involve time-dependent
parameters, are in line with expectations about the behavior of the orbits of
the system.

We also studied the special case where $s_{n}=0$ for all $n$ in greater detail
and determined that while the existence of fixed points in the positive
quadrant is a sufficient condition for survival, it is not necessary. This
surprising fact is true when $b>0,$ i.e. when the stages (adults and
juveniles) interact within each period $n$ but false if $b=0$ and inter-stage
interactions do not occur, a case that includes first-order population models
where there are no stages or age-structuring. Also non-intuitively, we found
that although the Allee equilibrium moves away from the origin due to
interactions between stages and leads to an enlargement of the extinction
region, this enlargement is not the maximum possible allowed by the shift.

There are many open questions about the nature of the extinction region $E$,
and its complement, the survival region. We pose a few of these questions as
open problems. The first concerns the base component $E_{0}$ of $E$.

\begin{problem}
\label{p1} Determine $E_{0}$ and its boundary, namely the extinction or Allee
threshold, under the hypotheses of Corollary \ref{ext}(c).
\end{problem}

\medskip

The numerically generated image in Figure \ref{E0} suggests that Problem
\ref{p1} has a nontrivial solution. This problem naturally leads to the following.

\begin{problem}
\label{p2} Determine the extinction set $E$ under the hypotheses of Corollary
\ref{ext}(c).
\end{problem}

\begin{figure}[tbp] 
  \centering
  \includegraphics[width=4.15in,height=3.09in,keepaspectratio]{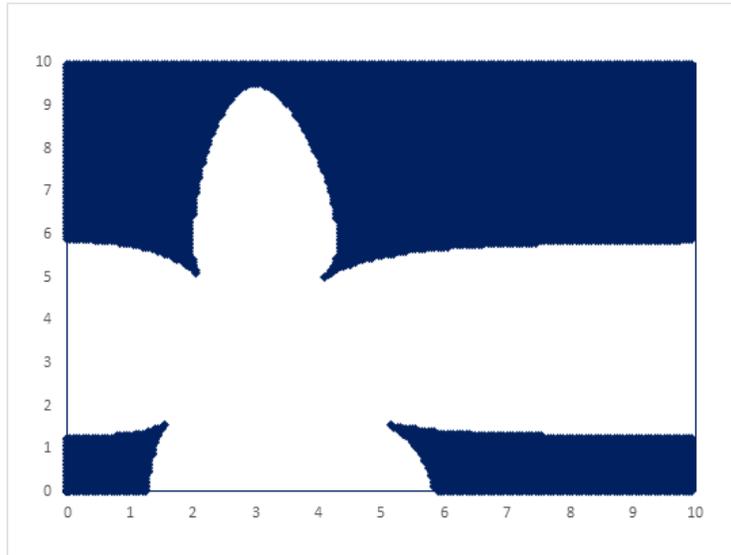}
  \caption{$E$ (shaded) and its complement for $\lambda = 3$, $a=0.7936$, 
$b=0.0891$, $s'=1$}
  \label{E}
\end{figure}

Figure \ref{E} illustrates a numerically generated part of $E$. This figure
shows 3 distinct components, two of which are unbounded. The following result
verifies that $E$ must have unbounded components.

\begin{proposition}
Assume that $\lambda>1$ and (\ref{fup}) holds. Let $u_{\ast}$ as defined in
Lemma \ref{ord1}, be the point such that $f(u_{\ast})=u^{\ast}$. If
\[
(x_{0},x_{1})\in R_{0,1}\cup R_{1,0}\cup R_{1,1}%
\]
where the rectangles $R_{0,1},R_{1,0},R_{1,1}$ are defined as%
\[
R_{0,1}=[0,u^{\ast})\times\lbrack u_{\ast},\infty),\quad R_{1,0}=[u_{\ast
},\infty)\times\lbrack0,u^{\ast}),\quad R_{1,1}=[u_{\ast},\infty)\times\lbrack
u_{\ast},\infty)
\]
then corresponding orbits of (\ref{4ab}) converge to zero, i.e. the above set
is contained in the extinction region $E$.
\end{proposition}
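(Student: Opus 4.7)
The plan is to show that after at most one iteration, both the even-indexed and odd-indexed subsequences of $\{x_n\}$ enter the interval $(0,u^{\ast}]$, at which point Lemma \ref{eoz} forces each subsequence to decrease monotonically to $0$. The key identity is to rewrite the recurrence (\ref{4ab}) as
\[
x_{n+1}=f(x_{n-1})\,e^{-bx_n},\qquad f(u)=u^{\lambda}e^{a-u},
\]
so that $x_{n+1}\le f(x_{n-1})$ for every $n\ge 1$. From $f'(u)=u^{\lambda-1}e^{a-u}(\lambda-u)$ we see that $f$ is strictly increasing on $[0,\lambda]$ and strictly decreasing on $[\lambda,\infty)$, so Lemma \ref{ord1}(f) gives $f(u)\le f(u_{\ast})=u^{\ast}$ for every $u\ge u_{\ast}$. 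This single inequality is the engine of the whole argument: any coordinate at least $u_{\ast}$ is mapped, two steps later, into $[0,u^{\ast}]$.

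I would treat the three rectangles in turn. For $R_{1,1}$, both $x_0,x_1\ge u_{\ast}$ yield $x_2\le f(x_0)\le u^{\ast}$ and $x_3\le f(x_1)\le u^{\ast}$, and since $x_0,x_1>0$ and $b>0$ we obtain $x_2,x_3\in(0,u^{\ast})$ strictly; Lemma \ref{eoz} applied to $k=2$ and $k=3$ then drives both subsequences to $0$. For $R_{0,1}$, the even subsequence already starts inside $[0,u^{\ast})$: if $x_0>0$ apply Lemma \ref{eoz} with $k=0$, and if $x_0=0$ then $x_{2n}\equiv 0$ trivially. For the odd subsequence, $x_1\ge u_{\ast}$ gives $x_3\le f(x_1)\le u^{\ast}$, and Lemma \ref{eoz} with $k=3$ finishes the job as soon as $x_3<u^{\ast}$. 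The case $R_{1,0}$ is completely symmetric: $x_0\ge u_{\ast}$ yields $x_2\le u^{\ast}$, while $x_1\in[0,u^{\ast})$ lets Lemma \ref{eoz} act directly on the odd subsequence. In all three situations, once each subsequence has at least one term in $(0,u^{\ast})$, the subsequence converges monotonically to $0$, and therefore the full orbit $\{x_n\}$ converges to $0$.

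The delicate step is verifying strict inequality $x_3<u^{\ast}$ (respectively $x_2<u^{\ast}$) in the boundary situation where $x_1=u_{\ast}$ in $R_{0,1}$, because $f(u_{\ast})=u^{\ast}$ exactly. Using $x_3=f(x_1)e^{-bx_2}$, strict inequality follows provided $b>0$ and $x_2>0$, i.e.\ provided $x_0>0$; the same reasoning handles $R_{1,0}$. The only degenerate boundary point escaping this argument is $(x_0,x_1)=(0,u_{\ast})$ (and its mirror $(u_{\ast},0)$), where the orbit collapses onto the $2$-periodic solution $\{0,u^{\ast},0,u^{\ast},\dots\}$ exhibited in Theorem \ref{nfpa}(a); aside from this measure-zero set, the strict bound holds and Lemma \ref{eoz} applies verbatim, which is the statement claimed by the proposition.
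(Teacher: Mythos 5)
Your argument is essentially the paper's: write $x_{i+2}=f(x_{i})e^{-bx_{i+1}}$, use $f(u)\leq f(u_{\ast})=u^{\ast}$ for $u\geq u_{\ast}$ (monotonicity of $f$ beyond its maximum at $\lambda<u_{\ast}$) to push each subsequence into $[0,u^{\ast}]$ within one step of the pair $(x_{0},x_{1})$, and then invoke Lemma \ref{eoz}. You are in fact more careful than the paper on one point: Lemma \ref{eoz} needs a term strictly inside the open interval $(0,u^{\ast})$, and the strict inequality $f(x_{i})e^{-bx_{i+1}}<u^{\ast}$ fails precisely when $x_{i}=u_{\ast}$ and $x_{i+1}=0$. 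Your observation that $(x_{0},x_{1})=(0,u_{\ast})$ and $(u_{\ast},0)$ produce the eventually $2$-periodic orbit $\{0,u^{\ast},0,u^{\ast},\ldots\}$ is correct; these two points do lie in $R_{0,1}$ and $R_{1,0}$ respectively, so the proposition as literally stated requires their exclusion, and the paper's own proof glosses over this by asserting $u^{\ast}e^{-bx_{i+1}}<u^{\ast}$ without checking $x_{i+1}>0$. Aside from flagging that genuine (measure-zero) exception, your proof matches the paper's.
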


\begin{proof}
For $i=0,1$, if $x_{i}<u^{\ast}$, then by Lemma \ref{eoz}, $\lim
_{k\rightarrow\infty}x_{i+2k}=0$. If $x_{i}\geq u_{\ast}$, then
\[
x_{i+2}=x_{i}^{\lambda}e^{a-bx_{i+1}-x_{i}}=f(x_{i})e^{-bx_{i+1}}\leq u^{\ast
}e^{-bx_{i+1}}<u^{\ast}%
\]
and the rest is again a consequence of Lemma \ref{eoz}.
\end{proof}

\medskip

We also saw that even when (\ref{4ab}) has no positive fixed points if
(\ref{fxp0}) holds, the set $E$ is not the entire quadrant $[0,\infty)^{2}$.
This surprising fact is illustrated in the numerically generated panels in
Figure \ref{nx2}. We also see in this figure that $E$ has fewer distinct
components for the larger value of $b$ and that the survival region (unshaded)
gets disconnected when the components of $E$ join.

\begin{figure}[tbp] 
  \centering
  \includegraphics[width=5.96in,height=2.76in,keepaspectratio]{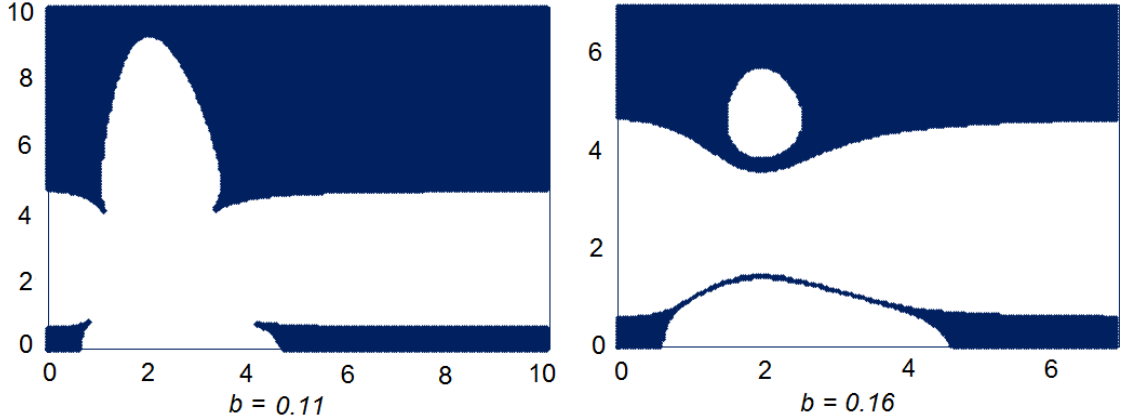}
  \caption{$E$ for $\lambda = 2$, $a=1.1$, $s'=1$ and two different values 
of $b$}
  \label{nx2}
\end{figure}

These figures motivate the following.

\begin{problem}
Determine the extinction set $E$, or more to the point, its complement the
survival set, when (\ref{fxp0}) holds.
\end{problem}

\medskip Settling the following conjecture may be relevant to the preceding study.

\begin{conjecture}
Assume that (\ref{fxp0}) holds. Then for every positive solution $\{ x_{n}\}$
of (\ref{4ab}) at least one of the two subsequences $\{ x_{2n}\}$ or $\{
x_{2n-1}\}$ converges to zero.
\end{conjecture}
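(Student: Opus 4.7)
The plan is to argue by contradiction. Suppose $\{x_n\}$ is a positive solution of (\ref{4ab}) and neither $\{x_{2n}\}$ nor $\{x_{2n-1}\}$ converges to $0$. The lower inequality in (\ref{fxp0}) ensures by Lemma \ref{ord1}(b) that the scalar map $f(u)=u^{\lambda}e^{a-u}$ has an Allee fixed point $u^{\ast}>0$, and Lemma \ref{eoz} then says that any single $x_k\in(0,u^{\ast})$ would drag the whole same-parity tail $x_k,x_{k+2},\ldots$ monotonically to $0$. Since we have assumed neither subsequence converges to $0$, this precludes any $x_k$ from lying in $(0,u^{\ast})$, and positivity therefore forces $x_n\geq u^{\ast}$ for \emph{every} $n$. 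This gives the lower bound $x_N x_{N+1}\geq(u^{\ast})^2$ that I plan to contradict.

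For the second ingredient I would use the upper inequality in (\ref{fxp0}). By Lemma \ref{2fp}(c) specialized to $s=0$, it is precisely the statement that the function $h(x)=x^{\lambda-1}e^{a-(b+1)x}$ introduced in (\ref{5a}) satisfies $h(x)<1$ for every $x>0$; since $h$ attains its unique positive maximum at $x_{\max}=(\lambda-1)/(b+1)$, this sharpens to $\rho\doteq\sup_{x>0}h(x)<1$. A direct rearrangement of (\ref{4ab}) gives
\[
\frac{x_{n+1}}{x_{n-1}}=h(x_{n-1})\,e^{b(x_{n-1}-x_n)},
\]
so after taking logarithms the key one-step estimate is
\[
\ln x_{n+1}-\ln x_{n-1}\leq \ln\rho+b(x_{n-1}-x_n).
\]

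The third and decisive step is to sum this estimate from $n=1$ to $n=N$ and notice that both sides telescope: the left collapses to $\ln x_N+\ln x_{N+1}-\ln x_0-\ln x_1$, and the $b$-weighted coupling on the right collapses to $b(x_0-x_N)\leq bx_0$. This produces
\[
\ln(x_N x_{N+1})\leq \ln(x_0 x_1)+bx_0+N\ln\rho,
\]
whose right side tends to $-\infty$ because $\rho<1$. Hence $x_N x_{N+1}\to 0$, contradicting the lower bound $(u^{\ast})^2$ from paragraph one and establishing the conjecture.

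I expect the main obstacle to be conceptual rather than computational: discovering that $\ln x_{n+1}-\ln x_{n-1}$ and the ``coupling'' term $b(x_{n-1}-x_n)$ telescope \emph{together}, so that the parity coupling between even and odd terms disappears and the whole two-dimensional problem reduces to the scalar inequality $h\leq\rho<1$ supplied by (\ref{fxp0}). One must also be careful that both inequalities in (\ref{fxp0}) are used in their strict form: the strict lower bound so that $u^{\ast}$ actually exists (so Lemma \ref{eoz} applies), and the strict upper bound so that $\rho<1$ rather than merely $\rho\leq 1$ (at the boundary case $\rho=1$ the telescoping estimate becomes vacuous).
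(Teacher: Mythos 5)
The statement you were asked to prove is posed in the paper as an open conjecture; no proof is supplied there, so there is no argument of the authors' to compare yours against. Judged on its own, your proof appears to be correct and complete, and as far as I can verify it settles the conjecture. Both inputs check out: the strict lower inequality in (\ref{fxp0}) gives (\ref{nac0}), so the Allee fixed point $u^{\ast}$ of $f$ exists and Lemma \ref{eoz} applies verbatim with $a_{n}=a$ and $b_{n}=b$; the strict upper inequality in (\ref{fxp0}) is exactly the negation of (\ref{fxp}) at $s=0$, so $\rho=h(x_{\max})=\left(\frac{\lambda-1}{b+1}\right)^{\lambda-1}e^{a-(\lambda-1)}<1$, and since $x_{\max}$ is the unique critical point of $h$ on $(0,\infty)$ and $h$ vanishes at both ends of that interval, $\rho$ is indeed $\sup_{x>0}h(x)$. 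The identity $x_{n+1}/x_{n-1}=h(x_{n-1})e^{b(x_{n-1}-x_{n})}$ follows at once from (\ref{4ab}) and (\ref{5a}), both telescoping sums are computed correctly, and the resulting bound $x_{N}x_{N+1}\leq x_{0}x_{1}e^{bx_{0}}\rho^{N}$ forces $x_{N}x_{N+1}\rightarrow0$, which is incompatible with $x_{n}\geq u^{\ast}$ for all $n$. (You could present this without the contradiction framing: once $x_{N}x_{N+1}<(u^{\ast})^{2}$, some term of the positive solution lies in $(0,u^{\ast})$ and Lemma \ref{eoz} finishes.) Your estimate in fact yields more than the conjecture asks: under the upper inequality of (\ref{fxp0}) alone, $x_{n}x_{n+1}\rightarrow0$ geometrically for every positive solution, the lower inequality being used only to convert this into monotone convergence of one parity class. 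The conclusion is also consistent with Theorem \ref{nfpa}(b), whose surviving solutions have $x_{2n}\leq\delta^{n}x_{0}\rightarrow0$ while the odd-indexed terms remain in $(u^{\ast},\bar{u})$.
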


\medskip

Another direction to pursue involves extending the range of the parameter $a$.

\begin{problem}
Explore the extinction and survival regions if $a>\lambda-(\lambda
-1)\ln\lambda$.
\end{problem}

Finally, it may be appropriate to close with the following.

\begin{problem}
Extend the preceding analysis to the more general equation (\ref{4a}) with
$s>0$ and $b>0.$
\end{problem}

\end{document}